\documentclass[12pt]{article}
\usepackage{amsmath}
\usepackage{mathtools}
\usepackage{amsfonts}
\usepackage{amsthm}
\usepackage[english]{babel} 
\usepackage[iso-8859-7]{inputenc}
\usepackage{bold-extra}
\usepackage{indentfirst}
\usepackage{hyperref}
\usepackage[margin=1.1in]{geometry}
\newtheorem{definition}{Definition}
\newtheorem{lemma}[definition]{Lemma}
\newtheorem{theorem}[definition]{Theorem}
\newtheoremstyle{r}
{}
{}
{\normalfont}
{}
{\scshape}
{.}
{ }
{}
\theoremstyle{r}
\newtheorem{remark}[definition]{Remark}
\numberwithin{definition}{section}
\allowdisplaybreaks
\title{Universal Taylor Series in several variables depending on parameters}
\author{G. Gavrilopoulos, K. Maronikolakis and V. Nestoridis}
\date{}
\begin{document}
	\maketitle
	\begin{abstract}
		We establish generic existence of Universal Taylor Series on products $\Omega = \prod \Omega_i$ of planar simply connected domains $\Omega_i$ where the universal approximation holds on products $K$ of planar compact sets with connected complements provided $K \cap \Omega = \emptyset$. These classes are with respect to one or several centers of expansion and the universal approximation is at the level of functions or at the level of all derivatives. Also, the universal functions can be smooth up to the boundary, provided that $K \cap \overline{\Omega} = \emptyset$ and $\{\infty\} \cup [\mathbb{C} \setminus \overline{\Omega}_i]$ is connected for all $i$. All previous kinds of universal series may depend on some parameters; then the approximable functions may depend on the same parameters, as it is shown in the present paper. These universalities are topologically and algebraically generic.
	\end{abstract}
	\textbf{AMS classification numbers:} 32A05, 30K05, 30E10.\\
	\textbf{Key words and phrases:} Taylor series, Universality, Baire's theorem, generic property, partial sums, product of planar domains.
	\section{Introduction}
	In \cite{Bayart} and the references therein, one can read about the history of universal Taylor series. The first universal Taylor series was established before 1914 by Fekete \cite{Pal}. The existence of a power series $\sum\limits_{k = 1}^\infty a_k x^k, a_k \in \mathbb{R}$, such that, its partial sums $\sum\limits_{k = 1}^n a_k x^k, n = 1, 2, \dots$ approximate uniformly on $[-1, 1]$ every continuous function $h : [-1, 1] \rightarrow \mathbb{R}$ with $h(0) = 0$ was shown.
	
	In 1945, we have the universal trigonometric series in the sense of Menchoff \cite{Menshov}. There exists a trigonometric series $\sum\limits_{k = - \infty}^\infty a_k e^{ikx}, a_k \in \mathbb{C}, a_k \to 0 \text{ as } |k| \to + \infty$, such that its partial sums $\sum\limits_{k = - n}^n a_k e^{ikx}, n = 0, 1, 2, \dots$ approximate in the almost everywhere sense every $2 \pi$ - periodic complex measurable function $h : \mathbb{R} \rightarrow \mathbb{C}$.
	
	In 1951, Seleznev \cite{Seleznev} showed the existence of a power series $\sum\limits_{k = 0}^\infty a_k z^k, a_k \in \mathbb{C}$, with zero radius of convergence, such that its partial sums $\sum\limits_{k = 0}^n a_k z^k, n = 0, 1, 2, \dots$ approximate uniformly any polynomial on any compact set $K \subseteq \mathbb{C} \setminus \{0\}$ with connected complement $\mathbb{C} \setminus K$.
	
	In the early 70's, W. Luh \cite{Luh} and independently Chui and Parnes \cite{Chui} showed the existence of power series $\sum\limits_{k = 0}^\infty a_k z^k, a_k \in \mathbb{C}$ with radius of convergence $1$, such that its partial sums approximate any polynomial on any compact set $K \subseteq \{z \in \mathbb{C} : |z| > 1\}$ with connected complement. Grosse - Erdmann showed in his thesis (1987) \cite{Grosse1}, using Baire's Theorem, that the phenomenon is generic in the space $H(D)$ of all holomorphic functions $f$ on the open unit disk $D = \{z \in \mathbb{C} : |z| < 1\}$ endowed with the topology of uniform convergence on compacta.
	
	In 1996, V. Nestoridis strengthened the result of Luh and Chui and Parnes allowing the compact set $K$ to meet the boundary of the unit disk \cite{Nestoridis1}. Soon, the open unit disk $D$ was replaced by any simply connected domain $\Omega$ in $\mathbb{C}$ and the set of functions whose Taylor series realized the previously mentioned approximations, called universal Taylor series, has been proven to be a $G_\delta$ and dense subset of the space $H(\Omega)$.
	
	Roughly speaking, it is well known that under some assumptions, the partial sums of the Taylor expansion of a function $f$ near the center of expansion approximate only $f$. But what happens far from the center of expansion? The previously mentioned results show that generically, they approximate all functions that can be approximated by polynomials.
	
	More recently \cite{Nestoridis2}, it was shown that not only the polynomials $P$ could be approximated on the compact set $K$ by the partials sums $S_{N_k}, k = 1, 2, \dots$ but also every derivative of $P$ can be approximated on $K$ by the corresponding derivative of $S_{N_k}$.
	
	In \cite{Abakumov} we consider families of universal Taylor series depending on a parameter; then the function $h$ to be approximated by the partial sums can depend on the same parameter. This led to functions of several complex variables; see also \cite{Clouatre}, \cite{Daras}, \cite{Kioulafa}.
	
	In the present paper we obtain several extensions of the result in \cite{Abakumov} and other results in the case of several complex variables. We divide the set of variables in two groups: one group, the set of parameters and a second group, the set of variables appearing in the Taylor expansion. We also obtain that the universal functions can be smooth on the boundary, provided that the sets $K$ are disjoint from the closure of the domain of definition $\Omega = \prod \Omega_i$ and that the planar simply connected domain $\Omega _i$ satisfies that $\{\infty\} \cup [\mathbb{C} \setminus \overline{\Omega}_i]$ is connected; see also \cite{Kariofillis}.
	
	We close mentioning that the set $E$ of functions which can be approximated uniformly on the compact set $K$ by polynomials is of interest for the theory of universal Taylor series. If $K \subseteq \mathbb{C}$, there exists the celebrated theorem of Mergelyan (1952) stating that, if $\mathbb{C} \setminus K$ is connected, then this set $E$ coincides with $A(K) = \{h : K \rightarrow \mathbb{C}, \text{ continuous on } K \text{ holomorphic on } K^{\mathrm{o}}\}$. In several complex variables, the theory is much less developed. It is true that the set $E$ is included in $A(K)$, but there is a smaller function algebra $A_D(K), E \subseteq A_D(K) \subseteq A(K)$, which is more appropriate for approximation, \cite{Falco}.
	
	In \cite{Falco} some sufficient conditions so that $E = A_D(K)$ are given, when $K$ is a product of compact planar sets. In the present paper we are using this new algebra $A_D(K)$ and we give its definition in section 2. 
	
	The organization of the paper is as follows. In section 2 we give some preliminary results concerning $A_D(K)$. In section 3 we extend to several variables \cite{Abakumov}, \cite{Kioulafa} and \cite{Nestoridis1}. In section 4 we strengthen the result of section 2 obtaining approximation for all derivatives. Finally, in section 5 we extend the result of \cite{Kariofillis} and we obtain smooth universal Taylor series in the weak sense.
	
	Our method of proof is based on Baire's Category Theorem. For its use in analysis we refer to \cite{Kahane} and \cite{Grosse2}.
	\section{Preliminaries}
	We present some context from \cite{Falco} which will be useful later on.
	\begin{definition}\cite{Falco}
		Let $L \subset \mathbb{C}^n$ be a compact set. A function $f : L \rightarrow \mathbb{C}$ is said to belong to the class $A_D(L)$ if it is continuous on $L$ and, for every open disk $D \subset \mathbb{C}$ and every injective mapping $\phi : D \rightarrow L \subset \mathbb{C}^n$ holomorphic on $D$, the composition $f \circ \phi : D \rightarrow \mathbb{C}$ is holomorphic on D.
	\end{definition}
	We recall that a function $\phi : D \rightarrow \mathbb{C}^n$, where $D$ is a planar domain, is holomorphic if each coordinate is a holomorphic function.
	
	We have the following approximation lemmas:
	\begin{lemma}\cite{Falco}
		\label{Merg_1}
		Let $L = \displaystyle\prod_{i = 1}^n L_i$ where the sets $L_i$ are compact subsets of $\mathbb{C}$ with connected complement for $i = 1, 2, \dots, n$. If $f \in A_D(L)$ and $\varepsilon > 0$, then there exists a polynomial $p$ such that $\sup\limits_{z \in L}|f(z) - p(z)| < \varepsilon$.
	\end{lemma}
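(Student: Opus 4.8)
The plan is to argue by induction on the number $n$ of factors. For $n = 1$ the statement is Mergelyan's theorem once one observes that $A_D(L_1) = A(L_1) := \{h \in C(L_1) : h \text{ holomorphic on } L_1^{\mathrm{o}}\}$: feeding into the definition of $A_D$ the affine maps $\phi(z) = w_0 + rz$ with $\overline{D}(w_0, r) \subset L_1^{\mathrm{o}}$ shows $A_D(L_1) \subset A(L_1)$, while the open mapping theorem (an injective holomorphic $\phi : D \to L_1$ has open, hence interior, image) gives $A(L_1) \subset A_D(L_1)$; since $\mathbb{C} \setminus L_1$ is connected, Mergelyan's theorem then supplies the approximating polynomial.

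For the inductive step I would write $L = L' \times L_n$ with $L' = \prod_{i = 1}^{n-1} L_i$, fix $f \in A_D(L)$, and record two structural facts. (i) Each slice $f(z', \cdot)$ lies in $A(L_n)$: it is continuous, and for $w_0 \in L_n^{\mathrm{o}}$ the map $z \mapsto (z', w_0 + rz)$ with $\overline{D}(w_0, r) \subset L_n^{\mathrm{o}}$ is an injective holomorphic disk into $L$, so $z \mapsto f(z', w_0 + rz)$ is holomorphic. (ii) The map $F : L' \to A(L_n)$, $F(z') = f(z', \cdot)$, is continuous (by uniform continuity of $f$ on the compact $L$), the family $\{F(z')\}_{z'}$ is bounded and equicontinuous on $L_n$, and for every injective holomorphic $\phi : D \to L'$ and every bounded linear functional $\lambda$ on $A(L_n)$ the scalar function $z \mapsto \lambda(F(\phi(z)))$ is holomorphic on $D$ — one represents $\lambda$ by a measure $\mu$ on $L_n$ (Hahn--Banach followed by Riesz) and applies Morera's theorem to $z \mapsto \int_{L_n} f(\phi(z), w)\, d\mu(w)$, using that $z \mapsto f(\phi(z), w)$ is holomorphic for each $w$ (the map $z \mapsto (\phi(z), w)$ is an injective holomorphic disk into $L$) and that $f$ is bounded.

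Next I would invoke a parametric version of the constructive proof of one-variable Mergelyan: for the compact set $L_n$ with connected complement and a scale $\delta > 0$, the standard construction — continuous compactly supported extension, mollification at scale $\delta$, Vitushkin's localization scheme, then pushing the finitely many resulting poles to infinity (here using that $\mathbb{C} \setminus L_n$ is connected) — yields a \emph{bounded linear} operator $T_\delta : A(L_n) \to \mathbb{C}[w]$ with $\|h - T_\delta h\|_{L_n} \le C(L_n)\,\omega_h(\delta)$, where $\omega_h$ is a modulus of continuity of $h$; the linearity of $T_\delta$ for fixed $\delta$ holds because every step is a bounded linear operation (convolution with a fixed kernel, multiplication by fixed cut-offs, Cauchy transforms, and the Borsuk--Dugundji linear extension operator). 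Using the common modulus of continuity of the equicontinuous family from (ii), choose $\delta$ with $C(L_n)\,\omega(\delta) < \varepsilon/2$, so that $\sup_{z'}\|f(z', \cdot) - T_\delta(f(z', \cdot))\|_{L_n} < \varepsilon/2$. Writing $T_\delta(f(z', \cdot)) = \sum_{j = 0}^N a_j(z')\, w^j$, each coefficient has the form $a_j(z') = \lambda_j(f(z', \cdot))$ for a fixed bounded linear functional $\lambda_j$ on $A(L_n)$, hence $a_j \in A_D(L')$ by (ii) together with continuity. Applying the induction hypothesis to the finitely many $a_j$ gives polynomials $q_j(z')$ with $\sup_{L'}|a_j - q_j|$ small enough that $p(z', w) := \sum_{j = 0}^N q_j(z')\, w^j$ satisfies $\sup_L |\sum_j a_j w^j - p| < \varepsilon/2$; then $\sup_L |f - p| < \varepsilon$, which closes the induction.

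I expect the main obstacle to be precisely this parametric/linear form of Mergelyan's theorem: one must know that the one-variable construction, carried out at a fixed approximation scale, can be realized by bounded linear operations only, so that the coefficients of the approximating polynomial are fixed bounded linear functionals of the slice $f(z', \cdot)$ and therefore inherit membership in $A_D(L')$ via the Morera argument of step (ii). Everything else — the two structural facts about slices and the final assembly of $p$ — is routine.
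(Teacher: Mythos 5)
The paper offers no proof of this lemma to compare against: it is imported wholesale from the reference [Falco et al.] (Remark 4.9(3) there). So your attempt has to be judged on its own. The architecture you chose --- induction on the number of factors, the identification $A_D(L_1)=A(L_1)$ plus Mergelyan for the base case, a ``Mergelyan with parameters'' step in the last variable, and the Hahn--Banach/Riesz/Morera argument showing that the coefficient functionals send $A_D(L)$ into $A_D(L')$ --- is the natural route for this kind of product Mergelyan theorem and is close in spirit to the source. Your structural facts (i) and (ii) are correct as stated: the map $z\mapsto(\phi(z),w)$ is indeed an injective holomorphic disk in $L$ for every fixed $w\in L_n$, so each $z\mapsto f(\phi(z),w)$ is holomorphic and bounded, and Fubini plus Morera then handles $z\mapsto\int f(\phi(z),w)\,d\mu(w)$.

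The step you rightly flag as the crux is also where your write-up is not yet a proof. The Vitushkin--Rudin construction at scale $\delta$ linearly produces a function $F_\delta h$ holomorphic on a fixed open neighbourhood $\Omega_\delta$ of $L_n$ with $\|h-F_\delta h\|_{L_n}\le C(L_n)\,\omega_h(\delta)$; the further passage to an actual polynomial (Cauchy integral over a cycle in $\Omega_\delta\setminus L_n$, Riemann sums, pole-pushing to infinity using connectedness of $\mathbb{C}\setminus L_n$) can indeed be made linear, but its error is naturally bounded by $\eta\,\|h\|_{L_n}$ (via Cauchy estimates on the image of the unit ball under $F_\delta$), not by $\omega_h(\delta)$. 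So the honest estimate is $\|h-T_{\delta,\eta}h\|\le C\,\omega_h(\delta)+\eta\,\|h\|$, which still suffices because your slices $f(z',\cdot)$ form a bounded, uniformly equicontinuous family --- but the operator and its bound should be stated that way, and the boundedness of the coefficient functionals $\lambda_j$ on $(A(L_n),\|\cdot\|_\infty)$ should be extracted from the construction (the coefficients are finite linear combinations of point evaluations of $F_\delta h$ on the cycle, each of size $O(\|h\|)$) rather than asserted. With those repairs the argument closes; as written, the existence of the bounded linear polynomial-valued Mergelyan operator is the one genuinely nontrivial ingredient and it is assumed rather than established.
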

	\begin{lemma}\cite{Falco}
		\label{Merg_2}
		Let $L = \displaystyle\prod_{i = 1}^n L_i$ where the sets $L_i$ are compact subsets of $\mathbb{C}$ with connected complement for $i = 1, 2, \dots, n$ and $I$ a finite subset of $\mathbb{N}^n$. If $f$ is holomorphic on a neighbourhood of $L$ and $\varepsilon > 0$, then there exists a polynomial $p$ such that $$\sup\limits_{z \in L}\Big|\dfrac{\partial^{m_1 + \dots + m_n}}{\partial z_1^{m_1} \cdots \partial z_n^{m_n}}(f - p)(z)\Big| < \varepsilon$$ for all $m = (m_1, \dots, m_n) \in I$.
	\end{lemma}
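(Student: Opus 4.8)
The plan is to reduce the approximation of the partial derivatives to an approximation of \emph{values} on a slightly enlarged product compact, and then to recover the derivative bounds from Cauchy's inequalities. Since $f$ is holomorphic on some open set $U \supseteq L$ and $L = \prod_{i=1}^n L_i$ is compact, I would first fix $\delta>0$ so small that $\prod_{i=1}^n V_i \subseteq U$, where $V_i := \{w\in\mathbb{C} : \operatorname{dist}(w,L_i)<\delta\} \supseteq L_i$ (we may assume every $L_i\neq\emptyset$, the other case being trivial).

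The one slightly delicate point is that Lemma~\ref{Merg_1} is only available for products of planar compacts with \emph{connected complements}, so I need intermediate compacts of that type squeezed between the $L_i$ and the $V_i$. This is a standard fact: for each $i$ there is a compact set $\widetilde L_i$ with $\mathbb{C}\setminus\widetilde L_i$ connected and $L_i\subseteq\operatorname{int}\widetilde L_i\subseteq\widetilde L_i\subseteq V_i$ — because $\widehat{\mathbb{C}}\setminus L_i$ is open and connected it admits an exhaustion by open connected sets $W_j\ni\infty$ with $\overline{W_j}$ a compact subset of $\widehat{\mathbb{C}}\setminus L_i$, and one may take $\widetilde L_i=\widehat{\mathbb{C}}\setminus W_j$ for $j$ large. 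Writing $\widetilde L:=\prod_i\widetilde L_i$, we then have $L\subseteq\operatorname{int}\widetilde L$, $\widetilde L\subseteq\prod_i V_i\subseteq U$, each $\widetilde L_i$ with connected complement, and $f$ holomorphic on a neighbourhood of $\widetilde L$; in particular $f\in A_D(\widetilde L)$.

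Next, for a parameter $\eta>0$ to be fixed at the end, Lemma~\ref{Merg_1} applied to $\widetilde L$ yields a polynomial $p$ with $\sup_{\widetilde L}|f-p|<\eta$. Set $g:=f-p$, which is holomorphic on $\prod_i V_i$ and satisfies $|g|<\eta$ on $\widetilde L$. For each $i$ put $r_i:=\operatorname{dist}\big(L_i,\mathbb{C}\setminus\operatorname{int}\widetilde L_i\big)>0$ (positive since $L_i$ is compact and contained in the open set $\operatorname{int}\widetilde L_i$). Then for every $z\in L$ the closed polydisc $\prod_i\overline{\Delta}(z_i,r_i/2)$ is contained in $\prod_i\operatorname{int}\widetilde L_i\subseteq\widetilde L$, so applying Cauchy's estimate successively in the $n$ variables gives, for any multi-index $m=(m_1,\dots,m_n)$,
\[
\sup_{z\in L}\Big|\frac{\partial^{m_1+\dots+m_n}}{\partial z_1^{m_1}\cdots\partial z_n^{m_n}}(f-p)(z)\Big|
\le m_1!\cdots m_n!\Big(\frac{2}{r_1}\Big)^{m_1}\cdots\Big(\frac{2}{r_n}\Big)^{m_n}\,\eta .
\]
Since $I$ is finite and the $r_i$ are fixed, the supremum $C$ over $m\in I$ of the constants on the right is finite; taking $\eta:=\varepsilon/(C+1)$ gives $\sup_{z\in L}\big|\partial^m(f-p)(z)\big|<\varepsilon$ for all $m\in I$.

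I expect the construction of the $\widetilde L_i$ (keeping connected complements while staying inside the region of holomorphy) to be the only step needing care; the rest is Cauchy's inequality together with bookkeeping of finitely many constants.
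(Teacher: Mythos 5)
Your argument is correct, but note that the paper itself supplies no proof of Lemma~\ref{Merg_2}: it is imported wholesale from \cite{Falco} (Proposition 2.8 there), so there is nothing internal to compare against. What you have written is a valid self-contained derivation of the derivative statement from the value statement of Lemma~\ref{Merg_1}, and it follows the natural (and, as far as the cited source goes, essentially the standard) route: uniform approximation on a slightly enlarged product compact, then Cauchy estimates on polydiscs of fixed polyradius to convert an $\eta$-bound on values into an $\varepsilon$-bound on the finitely many derivatives indexed by $I$. The one genuinely delicate step is the one you flag, namely producing compacts $\widetilde L_i$ with $\mathbb{C}\setminus\widetilde L_i$ connected and $L_i\subseteq\operatorname{int}\widetilde L_i\subseteq\widetilde L_i\subseteq V_i$, and your construction via an exhaustion of the connected open set $\widehat{\mathbb{C}}\setminus L_i$ by connected relatively compact open sets $W_j\ni\infty$ is sound: $\widetilde L_i=\widehat{\mathbb{C}}\setminus W_j$ is compact in $\mathbb{C}$, its complement $W_j\setminus\{\infty\}$ stays connected because removing a point from a planar domain does not disconnect it, $L_i$ lies in the open set $\widehat{\mathbb{C}}\setminus\overline{W_j}\subseteq\widetilde L_i$, and $\widetilde L_i\subseteq V_i$ once $W_j$ swallows the compact set $\widehat{\mathbb{C}}\setminus V_i$. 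The remaining pieces (choosing $\delta$ by a tube-lemma argument so that $\prod_i V_i\subseteq U$, checking $f\in A_D(\widetilde L)$ since $f$ is holomorphic near $\widetilde L$, positivity of the $r_i$, finiteness of $I$) are all in order, so I see no gap.
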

	Lemma \ref{Merg_1} is part of a result in \cite{Falco}, Remark 4.9 (3). For Lemma \ref{Merg_2} see Proposition 2.8 of \cite{Falco}. We also have the following (Proposition 2.7 of \cite{Falco}):
	\begin{lemma}\cite{Falco}
		\label{Simply}
		Let $U_i \subseteq \mathbb{C}, i = 1, \dots, n$ be simply connected domains and $U = \displaystyle\prod_{i = 1}^n U_i$. Then, the set of polynomials of $n$ variables is dense in $H(U)$ endowed with the topology of uniform convergence on all compact subsets of $U$.
	\end{lemma}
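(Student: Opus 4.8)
The plan is to reduce the several-variable statement to the one-variable theory — concretely, to Lemma \ref{Merg_1} — by squeezing an arbitrary compact subset of $U$ inside a product of planar compacta with connected complements. Fix $f \in H(U)$, a compact set $K \subseteq U$, and $\varepsilon > 0$; the goal is to produce a polynomial $p$ with $\sup_{K}|f-p| < \varepsilon$.

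First I would pass from $K$ to a product set. Writing $\pi_i$ for the $i$-th coordinate projection, put $C_i := \pi_i(K)$, a compact subset of $U_i$, and let $K_i$ be the polynomial hull of $C_i$ (that is, $C_i$ together with the bounded connected components of $\mathbb{C}\setminus C_i$), so that $K_i$ is compact and $\mathbb{C}\setminus K_i$ is connected, being the unbounded component of $\mathbb{C}\setminus C_i$. The only substantive point is that $K_i \subseteq U_i$, and this is where the simple connectivity of the factors enters: a bounded component $V$ of $\mathbb{C}\setminus C_i$ satisfies $\partial V \subseteq C_i \subseteq U_i$, so the connected set $\{\infty\}\cup(\mathbb{C}\setminus U_i)$ (connected precisely because $U_i$ is simply connected) avoids $\partial V$; since $V$ is open and relatively closed in the extended plane minus $\partial V$, it is a connected component there, and it does not contain $\infty$, so $\{\infty\}\cup(\mathbb{C}\setminus U_i)$ lies in a component different from $V$ and hence cannot meet $V$, i.e.\ $V \subseteq U_i$. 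Consequently $L := \prod_{i=1}^{n}K_i$ is a product of compact planar sets with connected complements and $K \subseteq \prod_i C_i \subseteq L \subseteq U$.

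To finish, observe that $f$ is holomorphic on the open set $U$, hence on a neighbourhood of $L$, so $f|_L$ lies in $A_D(L)$ (for any holomorphic $\phi$ into $L$, the composition $f\circ\phi$ is holomorphic). Lemma \ref{Merg_1} applied to $L$ and $f|_L$ then yields a polynomial $p$ with $\sup_{L}|f-p| < \varepsilon$, and in particular $\sup_{K}|f-p| < \varepsilon$; since $K$ and $\varepsilon$ were arbitrary, the polynomials of $n$ variables are dense in $H(U)$. I expect the containment $K_i \subseteq U_i$ to be the only step requiring care, everything else being routine bookkeeping about products of one-variable hulls exhausting $U$; alternatively one could avoid Lemma \ref{Merg_1} and instead run an iterated one-variable Runge/Cauchy-integral argument over the product $L$, approximating the resulting Riemann sums factor by factor, but invoking Lemma \ref{Merg_1} is the shortest route.
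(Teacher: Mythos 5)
Your proof is correct. The paper does not actually prove this lemma --- it is quoted from \cite{Falco} (Proposition 2.7 there) --- so there is no internal argument to compare against, but your derivation is the natural one and is complete. The two points that genuinely need care are both handled: the containment $K_i \subseteq U_i$ of the polynomial hull of $\pi_i(K)$, which you establish correctly by noting that a bounded component $V$ of $\mathbb{C}\setminus C_i$ is a connected component of the sphere minus $\partial V$ not containing $\infty$, while the connected set $\{\infty\}\cup(\mathbb{C}\setminus U_i)$ avoids $\partial V\subseteq C_i\subseteq U_i$ and contains $\infty$, hence misses $V$; and the membership $f|_L \in A_D(L)$, which is immediate because $f$ is holomorphic on the open set $U \supseteq L$, so every composition $f\circ\phi$ with $\phi$ holomorphic into $L$ is holomorphic. (A small shortcut: since $f$ is holomorphic on a neighbourhood of $L$, you could instead invoke Lemma \ref{Merg_2} with the single multi-index $(0,\dots,0)$ and bypass $A_D$ altogether; both routes rest on the same Mergelyan-type input from \cite{Falco}.) The reduction of an arbitrary compact $K\subseteq U$ to the product $L=\prod_{i=1}^{n}K_i$ of planar compacta with connected complements is the whole content of the lemma, and your argument for it is sound.
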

	We now introduce some notation:
	
	Suppose that $m = (m_1, \dots, m_d) \in \mathbb{N}^d$ and $z = (z_1, \dots, z_d) \in \mathbb{C}^d$, The,n we denote the monomial $z_1^{m_1} \cdot \dots \cdot z_d^{m_d}$ by $z^m$. Let $G \subseteq \mathbb{C}^r, \Omega \subseteq \mathbb{C}^d$ be open sets and $f : G \times \Omega \rightarrow \mathbb{C}$ be a holomorphic function, then, for each $w \in G$ and $\zeta \in \Omega$, we set $$\gamma_m(f,w,\zeta) = \dfrac{1}{m_1! \cdots m_d!} \dfrac{\partial^{m_1 + \dots + m_d}}{\partial u_1^{m_1} \cdots \partial u_d^{m_d}}f(w, u)\big|_{u = \zeta}$$where $u = (u_1, \dots, u_d)$. Now, let $N_k, k = 0, 1, \dots$ be an enumeration of $\mathbb{N}^d$ then for each $n = 0, 1, \dots$ we set $S_n(f, w, \zeta)(z) = \sum\limits_{k = 0}^{n}a_k(f, w, \zeta)(z - \zeta)^{N_k}$ where $a_k(f, w, \zeta) \equiv \gamma_{N_k}(f, w, \zeta)$.
	\section{Universal Taylor series with parameters in $H(G \times \Omega)$}
	Let $G_i \subseteq \mathbb{C}, i = 1, \dots, r$ and $\Omega_i \subseteq \mathbb{C}, i = 1, \dots, d$ be simply connected domains. We set $G = \displaystyle\prod_{i = 1}^{r} G_i$ and $\Omega = \displaystyle\prod_{i = 1}^{d} \Omega_i$.
	\begin{definition}
		Let $ N_k, k = 0, 1, \dots$ be an enumeration of $\mathbb{N}^d$ and $\mu$ be an infinite subset of $\mathbb{N}$. We define $U^\mu(G, \Omega)$ to be the set of $f \in H(G \times \Omega)$ such that for every set $K = \displaystyle\prod_{i = 1}^{d} K_i$ where $K_i \subseteq \mathbb{C}$ are compact with $\mathbb{C} \setminus K_i$ connected for all $i = 1, \dots d$ and there is at least one $i_0 \in \{1, \dots, d\}$ such that $\Omega_{i_0} \bigcap K_{i_0} = \emptyset$ and every continuous function $h : G \times K \rightarrow \mathbb{C}$ such that for every fixed $w \in G$, the function $K \ni z \rightarrow h(w, z) \in \mathbb{C}$ belongs to $A_D(K)$ and for every fixed $z \in K$, the function $G \ni w \rightarrow h(w, z) \in \mathbb{C}$ belongs to $H(G)$, there exists a strictly increasing sequence $\lambda_n \in \mu, n = 1, 2, \dots$ such that 
		\begin{align*}	
		& \sup_{\zeta \in M}\sup_{w \in L}\sup_{z \in K}\big|S_{\lambda_n}(f, w, \zeta)(z) - h(w,z)\big| \longrightarrow 0 \text{ and } \\
		& \sup_{\zeta \in M}\sup_{w \in L}\sup_{z \in M}\big|S_{\lambda_n}(f, w, \zeta)(z) - f(w,z)\big| \longrightarrow 0
		\end{align*}
		for every compact subset $L$ of $G$ and for every compact subset of $M$ of $\Omega$.
	\end{definition}
	We notice that the class $U^\mu(G, \Omega)$ can also be defined without the requirement that the sequence  $(\lambda_n)_{n \in \mathbb{N}}$ is strictly increasing and then the two definitions are equivalent; see \cite{Vlachou}.
	
	We also have the following equivalence: A continuous function $h : G \times K \rightarrow \mathbb{C}$ has the properties that for every fixed $w \in G$, the function $K \ni z \rightarrow h(w, z) \in \mathbb{C}$ belongs to $A_D(K)$ and for every fixed $z \in K$, the function $G \ni w \rightarrow h(w, z) \in \mathbb{C}$ belongs to $H(G)$ if and only if for every compact sets $L_i \subseteq G_i$ with $\mathbb{C} \setminus L_i$ connected for all $i = 1, \dots, r$, the restriction of $h$ to the Cartesian product $\big(\displaystyle\prod_{i = 1}^r L_i\big) \times K$ belongs to $A_D\Big[\big(\displaystyle\prod_{i = 1}^r L_i\big) \times K\Big]$. This holds, because, as it is proven in \cite{Falco}, a continuous complex function defined on a product $M = \displaystyle\prod_{l = 1}^{\sigma}M_l$ of planar compact sets $M_l$ belongs to $A_D(M)$ if and only if, for every $l \in \{1, \dots, \sigma\}$, the corresponding slice functions belong to $A_D(M_l) = A(M_l).$
	
	Let $F_\tau, \tau = 1, 2, \dots$ be an exhausting family of compact subsets of $G$ and $M_p, p = 1, 2, \dots$ be an exhausting family of compact subsets of $\Omega$, where each one of the sets $F_\tau$ and $M_p$ is a product of planar compact sets with connected complement. We may also assume that $F_\tau \subseteq F_{\tau + 1}$ for all $\tau = 1, 2, \dots$ and $M_p \subseteq M_{p + 1}$ for all $p = 1, 2, \dots$. It is known that, for each $i = 1, \dots, d$, there exist compact sets $R_{i, j} \subseteq \mathbb{C} \setminus\Omega_i, j = 1, 2, \dots$ with connected complement, such that for every compact set $T \subseteq \mathbb{C} \setminus \Omega_i$ with connected complement, there exists an integer $j$ such that $T \subseteq R_{i, j}$. Let $T_m$ be an enumeration of all $\displaystyle\prod_{i = 1}^{d} Q_i$, such that there exists an integer $i_0 = 1, 2, \dots, d$ with $Q_{i_0} \in \{R_{i_0, j} : j = 1, 2, \dots,\}$ and the rest of the sets $Q_i$ are closed disks centered at 0 whose radius is a positive integer.
	
	Let $f_j, j = 1, 2, \dots$ be an enumeration of all polynomials of $r + d$ variables having coefficients with rational coordinates. For any $\tau, p, m, j, s, n$ with $\tau, p, m, j, s \geq 1, n \geq 0$, we denote by $E(\tau, p, m, j, s, n)$ the set $$\Big\{f \in H(G \times \Omega) : \sup_{\zeta \in M_p}\sup_{w \in F_\tau}\sup_{z \in T_m}\big|S_n(f, w, \zeta)(z) - f_j(w,z)\big| < \frac{1}{s}\Big\},$$ and by $F(\tau, p, s, n)$ the set
	$$\Big\{f \in H(G \times \Omega) : \sup_{\zeta \in M_p}\sup_{w \in F_\tau}\sup_{z \in M_p}\big|S_n(f, w, \zeta)(z) - f(w,z)\big| < \frac{1}{s}\Big\}.$$
	\begin{lemma}
	\label{Inter_1}
		With the above assumptions,
		$$U^{\mu}(G, \Omega)=\bigcap_{\tau, p, m, j, s} \ \ \bigcup_{n\in \mu} \big[E(\tau, p, m, j, s, n) \bigcap F(\tau, p, s, n)\big].$$
	\end{lemma}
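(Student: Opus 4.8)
The plan is to establish the set equality by proving the two inclusions separately; the inclusion ``$\subseteq$'' is routine while ``$\supseteq$'' carries the real content.

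For ``$\subseteq$'', let $f\in U^\mu(G,\Omega)$ and fix indices $\tau,p,m,j,s$. By construction $K:=T_m$ is a product of planar compacta with connected complements, one factor of which, say the $i_0$-th, is a set of the form $R_{i_0,\nu}\subseteq\mathbb{C}\setminus\Omega_{i_0}$, so $\Omega_{i_0}\cap K_{i_0}=\emptyset$ and $K$ is an admissible target set. The polynomial $f_j$, viewed as a function on $G\times K$, is an admissible datum: it is continuous, each slice $z\mapsto f_j(w,z)$ is entire, hence lies in $A_D(K)$, and each slice $w\mapsto f_j(w,z)$ is entire, hence lies in $H(G)$. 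Applying the definition of $U^\mu(G,\Omega)$ with $L=F_\tau$, $M=M_p$ and $h=f_j$ gives a sequence $\lambda_n\in\mu$ along which $S_{\lambda_n}(f,w,\zeta)(z)$ tends to $f_j(w,z)$ uniformly on $M_p\times F_\tau\times T_m$ and to $f(w,z)$ uniformly on $M_p\times F_\tau\times M_p$; taking $n$ large we obtain some $\lambda_n\in\mu$ with $f\in E(\tau,p,m,j,s,\lambda_n)\cap F(\tau,p,s,\lambda_n)$. Since $\tau,p,m,j,s$ were arbitrary, $f$ lies in the right-hand side.

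For ``$\supseteq$'', suppose $f$ belongs to the right-hand side, and fix an admissible pair $(K,h)$ with, say, $\Omega_{i_0}\cap K_{i_0}=\emptyset$. I first reduce the convergence demanded in the definition to a countable list of conditions: since $(F_\tau)$ and $(M_p)$ are increasing exhaustions of $G$ and $\Omega$ and the relevant suprema are monotone in $L$ and $M$, it suffices to produce $\lambda_q\in\mu$ with
\[
\sup_{\zeta\in M_q}\sup_{w\in F_q}\sup_{z\in K}\big|S_{\lambda_q}(f,w,\zeta)(z)-h(w,z)\big|<\tfrac1q \quad\text{and}\quad \sup_{\zeta\in M_q}\sup_{w\in F_q}\sup_{z\in M_q}\big|S_{\lambda_q}(f,w,\zeta)(z)-f(w,z)\big|<\tfrac1q
\]
for every $q$; by the remark following the definition of $U^\mu(G,\Omega)$, the resulting (not necessarily increasing) sequence already witnesses $f\in U^\mu(G,\Omega)$. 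To produce $\lambda_q$: since $K_{i_0}\subseteq\mathbb{C}\setminus\Omega_{i_0}$ is compact with connected complement it is contained in some $R_{i_0,\nu}$, while every other $K_i$ sits inside a closed disk about $0$ of integer radius, so the product of these sets is one of the $T_m$ and contains $K$. The set $F_q\times K$ is a product of planar compacta with connected complements, and by the slice-wise characterisation of $A_D$ for such products (\cite{Falco}), together with the hypotheses on $h$, the restriction $h|_{F_q\times K}$ belongs to $A_D(F_q\times K)$; hence Lemma~\ref{Merg_1} furnishes a polynomial, and after a further uniform approximation a polynomial $f_j$ with rational coefficients, such that $\sup_{F_q\times K}|h-f_j|<\tfrac1{2q}$. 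Picking $s$ with $1/s<\tfrac1{2q}$, the assumption $f\in\bigcup_{n\in\mu}[E(q,q,m,j,s,n)\cap F(q,q,s,n)]$ supplies $\lambda_q\in\mu$ with $f\in E(q,q,m,j,s,\lambda_q)\cap F(q,q,s,\lambda_q)$, and then the triangle inequality $|S_{\lambda_q}(f,w,\zeta)(z)-h(w,z)|\le|S_{\lambda_q}(f,w,\zeta)(z)-f_j(w,z)|+|f_j(w,z)-h(w,z)|$ on $M_q\times F_q\times K\subseteq M_q\times F_q\times T_m$, together with membership in $F(q,q,s,\lambda_q)$ for the $f$-term, yields the two displayed bounds.

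The step I expect to be the main obstacle is the bookkeeping in the ``$\supseteq$'' direction: one must verify that an arbitrary admissible $K$ is contained in one of the structured compacta $T_m$ --- precisely where the requirement $\Omega_{i_0}\cap K_{i_0}=\emptyset$ for some $i_0$ and the exhaustion property of the $R_{i,j}$ are invoked --- and that approximation of the rational polynomials $f_j$ on the larger set $T_m$ can be transferred to approximation of the abstract datum $h$ on $K$, which rests on the $A_D$-Mergelyan Lemma~\ref{Merg_1} and on the slice-wise description of $A_D$ from \cite{Falco}. The convergence of $S_{\lambda_n}(f,\cdot,\cdot)$ to $f$ itself needs no extra argument, being built directly into the sets $F(\tau,p,s,n)$.
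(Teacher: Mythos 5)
Your proof is correct and follows essentially the same route as the paper's: the forward inclusion applies the definition with $h=f_j$ and the structured compacta $F_\tau, M_p, T_m$, and the reverse inclusion builds a single diagonal sequence $(\lambda_q)$ by approximating $h$ on $F_q\times K$ by a rational polynomial via Lemma~\ref{Merg_1}, embedding $K$ in some $T_{m_0}$, and using the triangle inequality together with the monotone exhaustions. The only differences are cosmetic (indexing by $q$ rather than $\tau$, choosing $s$ with $1/s<1/(2q)$ instead of $s=2\tau$).
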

	\begin{proof}
		For the inclusion $U^{\mu}(G, \Omega) \subseteq \bigcap\limits_{\tau, p, m, j, s} \ \ \bigcup\limits_{n\in \mu} \big[E(\tau, p, m, j, s, n) \bigcap F(\tau, p, s, n)\big]$, we consider a function $f\in U^{\mu}(G, \Omega)$ and arbitrary positive integers $\tau, p, m, j, s$.
		We shall show that there exists an integer $n' \in \mu$ such that $f \in E(\tau, p, m, j, s, n') \bigcap F(\tau, p, s, n')$. Since $f_j$ is a polynomial, it is holomorphic on $\mathbb{C}^r \times \mathbb{C}^d.$ Thus, using the definition of the class $U^{\mu}(G, \Omega)$, we fix a sequence $(\lambda_n)_{n \in \mathbb{N}},\lambda_n\in \mu$, such that 
		$$\sup_{\zeta \in M_p}\sup_{w \in F_\tau}\sup_{z \in T_m}\big|S_{\lambda_n}(f, w, \zeta)(z) - f_j(w,z)\big| \to 0 \ \text{as} \ n \to \infty$$ 
		and 
		$$\sup_{\zeta \in M_p}\sup_{w \in F_\tau}\sup_{z \in M_p}\big|S_{\lambda_n}(f, w, \zeta)(z) - f(w,z)\big| \to 0 \ \text{as} \ n \to \infty \ .$$ 
		Thus, there exist positive integers $n_1, n_2$, such that 
		$$\sup_{\zeta \in M_p}\sup_{w \in F_\tau}\sup_{z \in T_m}\big|S_{\lambda_n}(f, w, \zeta)(z) - f_j(w,z)\big|<\frac{1}{s}$$ for every $n \geq n_1$ and $$\sup_{\zeta \in M_p}\sup_{w \in F_\tau}\sup_{z \in M_p}\big|S_{\lambda_n}(f, w, \zeta)(z) - f(w,z)\big| < \frac{1}{s}$$ for every $n \geq n_2$. By setting $n_0 = \max\{n_1, n_2\}$ and $n' = \lambda_{n_0}$ we get $f\in $\break$E(\tau, p, m, j, s, n') \bigcap F(\tau, p, s, n')$ which is exactly what we wanted to show.
		
		For the other inclusion, we consider a function 
		$$f\in \bigcap\limits_{\tau, p, m, j, s} \ \ \bigcup\limits_{n\in \mu} \big[E(\tau, p, m, j, s, n) \bigcap F(\tau, p, s, n)\big] \ .$$
		Let $K = \displaystyle\prod_{i = 1}^{d} K_i$ where $K_i \subseteq \mathbb{C}$ are compact with $\mathbb{C} \setminus K_i$ connected for all $i = 1, \dots d$ and there is at least one $i_0 \in \{1, \dots, d\}$ such that $\Omega_{i_0} \bigcap K_{i_0} = \emptyset$. Let also $h : G \times K \rightarrow \mathbb{C}$ be a continuous function such that for every fixed $w \in G$, the function $K \ni z \rightarrow h(w, z) \in \mathbb{C}$ belongs to $A_D(K)$ and for every fixed $z \in K$, the function $G \ni w \rightarrow h(w, z) \in \mathbb{C}$ belongs to $H(G)$. 
		Then, for each positive integer $\tau$ we have $h\in A_D(F_{\tau} \times K)$; thus, by Lemma \ref{Merg_1}, there exists a positive integer $j_\tau$, such that 
		$$\sup_{w\in F_{\tau}}\sup_{z\in K}\left|h(w, z)-f_{j_\tau}(w, z)\right|<\frac{1}{2\tau} \ .$$
		Since the above relation does not depend on $\zeta$, it can be rewritten as
		$$\sup_{\zeta \in M_\tau}\sup_{w\in F_{\tau}}\sup_{z\in K}\left|h(w, z)-f_{j_\tau}(w, z)\right|<\frac{1}{2\tau} \ . \ (1)$$
		By the definition of the sets $T_m$, there exists a positive integer $m_0$, such that $K \subseteq T_{m_0}$. For each positive integer $\tau$ we have $f \in \bigcup\limits_{n\in \mu} \big[E(\tau, \tau, m_0, j_\tau, 2 \tau, n) \bigcap F(\tau, \tau, 2 \tau, n)\big]$, thus, there exists a positive integer $\lambda_{\tau}\in \mu$, such that 
		$$\sup_{\zeta\in M_{\tau}}\sup_{w\in F_{\tau}}\sup_{z\in T_{m_0}} \left|S_{\lambda_{\tau}}(f,w,\zeta)(z)-f_{j_\tau}(w,z)\right|<\frac{1}{2\tau}$$ 
		and $$\sup_{\zeta\in M_{\tau}}\sup_{w\in F_{\tau}}\sup_{z\in M_{\tau}} \left|S_{\lambda_{\tau}}(f,w,\zeta)(z)-f(w,z)\right|<\frac{1}{2\tau} \ . \ (2)$$
		Since $K \subset T_{m_0}$, it follows that 
		$$\sup_{\zeta\in M_{\tau}}\sup_{w\in F_{\tau}}\sup_{z\in K} \left|S_{\lambda_{\tau}}(f,w,\zeta)(z)-f_{j_\tau}(w,z)\right|<\frac{1}{2\tau} \ . \ (3)$$
		From $(1)$ and $(3)$ it follows that for each positive integer $\tau$, we have
		$$\sup_{\zeta\in M_{\tau}}\sup_{w\in F_{\tau}}\sup_{z\in K} \left|S_{\lambda_{\tau}}(f,w,\zeta)(z)-h(w,z)\right|<\frac{1}{\tau} \ . \ (4)$$
		
		We consider the sequence $(\lambda_\tau)_{\tau \in \mathbb{N}}$.
		Let $L \subseteq G$ and $M \subseteq \Omega$ be compact sets.
		Then, there exists a positive integer $\tau_0$, such that $L \subseteq F_{\tau_0}$ and $M \subseteq M_{\tau_0}$.
		Since the families $\{F_{\tau}:\tau=1,2,\ldots\}$ and $\{M_p:p=1,2,\ldots\}$ are increasing, it follows that $L\subseteq F_{\tau}$ and $M\subseteq M_{\tau}$ for any positive integer $\tau\geq \tau_0$, therefore, from $(4)$, it follows that 
		$$\sup_{\zeta\in M}\sup_{w\in L}\sup_{z\in K} \left|S_{\lambda_{\tau}}(f,w,\zeta)(z)-h(w,z)\right|<\frac{1}{\tau} \ \text{for all} \ \tau\geq \tau_0$$
		whence it follows that 
		$$\sup_{\zeta\in M}\sup_{w\in L}\sup_{z\in K} \left|S_{\lambda_{\tau}}(f,w,\zeta)(z)-h(w,z)\right|\to 0 \ \text{as} \ \tau\to \infty \ .$$
		From $(2)$, we get $$\sup_{\zeta\in M}\sup_{w\in L}\sup_{z\in M} \left|S_{\lambda_{\tau}}(f,w,\zeta)(z)-f(w,z)\right|<\frac{1}{2\tau} < \frac{1}{\tau} \ \text{for all} \ \tau\geq \tau_0$$
		and so $$\sup_{\zeta\in M}\sup_{w\in L}\sup_{z\in M} \left|S_{\lambda_{\tau}}(f,w,\zeta)(z)-f(w,z)\right| \to 0 \ \text{as} \ \tau\to \infty \ .$$
		Hence, $f\in U^{\mu}(G, \Omega)$ and the proof is completed.
	\end{proof}
	We consider the set $U^\mu(G, \Omega)$ as a subset of the space $H(G \times \Omega)$ endowed with the topology of uniform convergence on all compact subsets of $G \times \Omega$. Since $H(G \times \Omega)$ is a complete metrizable space, Baire's Theorem is at our disposal. So, if we prove that the sets $\bigcup_{n\in \mu} \big[E(\tau, p, m, j, s, n) \bigcap F(\tau, p, s, n)\big]$ are open and dense in $H(G \times \Omega)$ for all positive integers $\tau, p, m, j$ and $s$, then the set $U^\mu(G, \Omega)$ is a dense $G_\delta$ set.
	\begin{lemma}
		\label{Open_1}
		Let $\tau \geq 1, p \geq 1, m \geq 1, j \geq 1, s \geq 1$ and $n \in \mu$. Then, 
		\begin{itemize}
			\item [(i)] the set $E(\tau, p, m, j, s, n)$ is open in the space $H(G \times \Omega)$,
			\item [(ii)] the set $F(\tau, p, s, n)$ is open in the space $H(G \times \Omega)$.
		\end{itemize}
	\end{lemma}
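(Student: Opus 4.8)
The plan is to show that the complements of $E(\tau,p,m,j,s,n)$ and $F(\tau,p,s,n)$ are closed, which amounts to checking that the relevant suprema depend continuously on $f \in H(G\times\Omega)$ with the topology of uniform convergence on compacta. The key observation is that the coefficient functionals $a_k(f,w,\zeta) = \gamma_{N_k}(f,w,\zeta)$ are, for fixed multi-index and fixed $(w,\zeta)$, continuous linear functionals of $f$ — indeed they are given by Cauchy-type integral formulas over small polydiscs around $(w,\zeta)$ contained in $G\times\Omega$, so convergence of $f_\ell \to f$ uniformly on a suitable compact polydisc forces $\gamma_{N_k}(f_\ell,w,\zeta) \to \gamma_{N_k}(f,w,\zeta)$, uniformly for $(w,\zeta)$ in a compact subset of $G\times\Omega$.

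First I would fix $n$ and note that $S_n(f,w,\zeta)(z) = \sum_{k=0}^n a_k(f,w,\zeta)(z-\zeta)^{N_k}$ is a finite sum; so it suffices to control each term. For part (i): given $f$ with $\sup_{\zeta\in M_p}\sup_{w\in F_\tau}\sup_{z\in T_m}|S_n(f,w,\zeta)(z) - f_j(w,z)| = c < 1/s$, I would produce a compact polydisc neighbourhood $\widetilde{F}_\tau \times \widetilde{M}_p \subseteq G\times\Omega$ of $F_\tau\times M_p$ (using that $G$ and $\Omega$ are open and $F_\tau$, $M_p$ compact) over whose distinguished boundary the Cauchy integrals defining $\gamma_{N_k}$ are taken. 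If $g \in H(G\times\Omega)$ satisfies $\sup_{\widetilde{F}_\tau\times\widetilde{M}_p}|g - f| < \delta$, then each $|\gamma_{N_k}(g,w,\zeta) - \gamma_{N_k}(f,w,\zeta)|$ is bounded by a constant (depending only on the polydisc radii) times $\delta$, uniformly over $(w,\zeta)\in F_\tau\times M_p$. Since $z$ ranges over the compact set $T_m$, the factors $(z-\zeta)^{N_k}$ are bounded, so $\sup|S_n(g,w,\zeta)(z) - S_n(f,w,\zeta)(z)|$ is at most (const)$\cdot\delta$; choosing $\delta$ small enough that this is $< 1/s - c$ gives $g\in E(\tau,p,m,j,s,n)$, proving openness.

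For part (ii) the argument is essentially the same, with one extra wrinkle: the comparison function is now $f$ itself (through the term $-f(w,z)$ with $z\in M_p$), so one must estimate $\sup_{\zeta\in M_p}\sup_{w\in F_\tau}\sup_{z\in M_p}|S_n(g,w,\zeta)(z) - g(w,z) - (S_n(f,w,\zeta)(z) - f(w,z))|$, which by the triangle inequality is bounded by the $S_n$-difference (controlled exactly as above, since $M_p$ is compact) plus $\sup_{F_\tau\times M_p}|g - f| < \delta$. Again both pieces are $O(\delta)$, so the set $F(\tau,p,s,n)$ is open. I do not expect a serious obstacle here; the only point requiring mild care is making the polydisc neighbourhood and the Cauchy-estimate constants genuinely uniform in $(w,\zeta)$ over the compact sets $F_\tau\times M_p$, which follows from compactness and a standard Lebesgue-number / finite-subcover argument.
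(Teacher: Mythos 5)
Your proposal is correct and takes essentially the same approach as the paper: both reduce the lemma to the continuity of $f \mapsto \sup_{\zeta \in M_p}\sup_{w \in F_\tau}\sup_{z}\lvert S_n(f,w,\zeta)(z) - \cdot\rvert$, controlling the finitely many coefficient functionals $a_k(f,w,\zeta)$ uniformly over the compact sets. The only (cosmetic) difference is that you argue openness directly via quantitative Cauchy-integral estimates, whereas the paper shows sequentially that the complement is closed by invoking the Weierstrass theorem on convergence of derivatives; these are the same underlying fact.
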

	\begin{proof}
		(i) We shall show that $H(G \times \Omega) \setminus E(\tau, p, m, j, s, n)$ is closed in $H(G \times \Omega)$. Let \break $g_i \in H(G \times \Omega) \setminus E(\tau, p, m, j, s, n), i = 1, 2, \dots$ be a sequence that converges to a function $g \in H(G \times \Omega)$ uniformly on all compact subsets of $G \times \Omega$. We shall show that $g \in H(G \times \Omega) \setminus E(\tau, p, m, j, s, n)$ and so it suffices to show that $$\sup_{\zeta \in M_p}\sup_{w \in F_\tau}\sup_{z \in T_m}\big|S_n(g, w, \zeta)(z) - f_j(w,z)\big| \geq \frac{1}{s}.$$ 
		Let $D$ be a differential operator of mixed partial derivatives in $z = (z_1, \dots, z_d)$, then by the Weierstrass Theorem, we have $D g_i \to D g$ uniformly on all compact subsets of $G \times \Omega$ as $i \to \infty$. So, we have $$\sup_{\zeta \in M_p}\sup_{w \in F_\tau}\big|a_k(g_i, w, \zeta) - a_k(g, w, \zeta) \big| \to 0$$ as $i \to \infty$ for every $0 \leq k \leq n$. Thus, since the set $T_m$ is bounded, we get $$\sup_{\zeta \in M_p}\sup_{w \in F_\tau}\sup_{z \in T_m} \big|S_n(g_i, w, \zeta)(z) - S_n(g, w, \zeta)(z)\big| \to 0$$ as $i \to \infty$ and so $$\sup_{\zeta \in M_p}\sup_{w \in F_\tau}\sup_{z \in T_m}\big|S_n(g_i, w, \zeta)(z) - f_j(w,z)\big| \to \sup_{\zeta \in M_p}\sup_{w \in F_\tau}\sup_{z \in T_m}\big|S_n(g, w, \zeta)(z) - f_j(w,z)\big|$$ as $i \to \infty$. Since $\sup_{\zeta \in M_p}\sup_{w \in F_\tau}\sup_{z \in T_m}\big|S_n(g_i, w, \zeta)(z) - f_j(w,z)\big| \geq \frac{1}{s}$ for all $i = 1, 2, \dots$ we get $\sup_{\zeta \in M_p}\sup_{w \in F_\tau}\sup_{z \in T_m}\big|S_n(g, w, \zeta)(z) - f_j(w,z)\big| \geq \frac{1}{s}$ and the proof is completed.\\
		(ii) Following the previous proof , we can show that the set $H(G \times \Omega) \setminus F(\tau, p, s, n)$ is closed in $H(G \times \Omega)$.
	\end{proof}
	\begin{lemma}
		For every integer $\tau \geq 1, p \geq 1, m \geq 1, j \geq 1$ and $s \geq 1$, the set $\bigcup\limits_{n \in \mu}\big[E(\tau, p, m, j, s, n) \bigcap F(\tau, p, s, n)\big]$ is open and dense in the space $H(G \times \Omega)$.
	\end{lemma}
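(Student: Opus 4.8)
The plan is to get openness for free from Lemma \ref{Open_1} and to establish density by approximating an arbitrary $f\in H(G\times\Omega)$, uniformly on compacta, by a \emph{polynomial} that belongs to the union. For openness: for each $n\in\mu$ the set $E(\tau,p,m,j,s,n)$ is open by Lemma \ref{Open_1}(i) and $F(\tau,p,s,n)$ is open by Lemma \ref{Open_1}(ii), hence their intersection is open, and an arbitrary union of open sets is open.

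For density, fix $f\in H(G\times\Omega)$, a compact set $C\subseteq G\times\Omega$ and $\varepsilon>0$. Since the families $F_\tau$, $M_p$ exhaust $G$, $\Omega$, there are $\tau'\ge\tau$ and $p'\ge p$ with $C\subseteq F_{\tau'}\times M_{p'}=:L_0\times M_0$, so it suffices to produce $g$ in the union with $\sup_{L_0\times M_0}|g-f|<\varepsilon$. Write $T_m=\prod_{i=1}^d Q_i$ and let $i_0$ be such that $Q_{i_0}\in\{R_{i_0,l}:l\ge1\}$; then $Q_{i_0}$ is a compact subset of $\mathbb C\setminus\Omega_{i_0}$ with connected complement. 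The first step is topological: because $\Omega_{i_0}$ is simply connected, the $i_0$-th factor $M_{0,i_0}$ of $M_0$ can be enlarged to a compact set $\widetilde M_{i_0}$ with $M_{0,i_0}\subseteq\widetilde M_{i_0}\subseteq\Omega_{i_0}$ such that $\mathbb C\setminus\widetilde M_{i_0}$ and $\mathbb C\setminus(\widetilde M_{i_0}\cup Q_{i_0})$ are both connected (the standard ``filling in the holes'' argument of the theory; this is the only place where simple connectedness of $\Omega_{i_0}$ enters). Set $\widetilde M=\widetilde M_{i_0}\times\prod_{i\ne i_0}M_{0,i}$, a product of planar compacta with connected complements with $M_0\subseteq\widetilde M\subseteq\Omega$.

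Next I would build $g$ as a polynomial. The function equal to $0$ on $\widetilde M_{i_0}$ and to $1$ on $Q_{i_0}$ belongs to $A_D(\widetilde M_{i_0}\cup Q_{i_0})$, since the two sets are disjoint compacta, so by Lemma \ref{Merg_1} there is a one-variable polynomial $q$ with $\sup_{\widetilde M_{i_0}}|q|<\delta$ and $\sup_{Q_{i_0}}|q-1|<\delta$ for a constant $\delta>0$ to be fixed below; put $Q(w,z)=q(z_{i_0})$, so $\sup_{L_0\times\widetilde M}|Q|<\delta$ and $\sup_{L_0\times T_m}|Q-1|<\delta$. By Lemma \ref{Simply} choose a polynomial $P_1$ with $\sup_{L_0\times\widetilde M}|P_1-f|<\varepsilon/2$, and define
$$g=(1-Q)P_1+Q\,f_j .$$
On $L_0\times\widetilde M$ one has $g-f=(P_1-f)+Q(f_j-P_1)$ and on $L_0\times T_m$ one has $g-f_j=(1-Q)(P_1-f_j)$; since $P_1$ and $f_j$ are polynomials the suprema of $|f_j-P_1|$ and $|P_1-f_j|$ over these fixed compacta are finite, so taking $\delta$ small enough \emph{after} $P_1$ is fixed yields $\sup_{L_0\times\widetilde M}|g-f|<\varepsilon$ and $\sup_{L_0\times T_m}|g-f_j|<1/s$.

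Finally, because $g$ is a polynomial its Taylor expansion in $z$ about any centre terminates, so there is $n_1$ with $S_n(g,w,\zeta)(z)=g(w,z)$ for all $w,\zeta,z$ once $n\ge n_1$; choose $n\in\mu$ with $n\ge n_1$ (possible since $\mu$ is infinite). Then $\sup_{\zeta\in M_p}\sup_{w\in F_\tau}\sup_{z\in T_m}|S_n(g,w,\zeta)(z)-f_j(w,z)|=\sup_{F_\tau\times T_m}|g-f_j|<1/s$, so $g\in E(\tau,p,m,j,s,n)$; likewise $\sup_{\zeta\in M_p}\sup_{w\in F_\tau}\sup_{z\in M_p}|S_n(g,w,\zeta)(z)-g(w,z)|=0<1/s$, so $g\in F(\tau,p,s,n)$; and $\sup_{L_0\times M_0}|g-f|\le\sup_{L_0\times\widetilde M}|g-f|<\varepsilon$. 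Thus $g$ lies in $\bigcup_{n\in\mu}[E(\tau,p,m,j,s,n)\cap F(\tau,p,s,n)]$ and is $\varepsilon$-close to $f$ on $C$, establishing density. The point I expect to require the most care is the topological enlargement of $M_{0,i_0}$ within $\Omega_{i_0}$ so that its union with $Q_{i_0}$ still has connected complement; once that is in place, everything reduces to choosing $\delta$ small and invoking Lemmas \ref{Merg_1} and \ref{Simply}, plus the elementary remark that the partial sums of a polynomial eventually stabilize.
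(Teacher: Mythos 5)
Your proof is correct, and the openness part coincides with the paper's (Lemma \ref{Open_1} plus the fact that a union of open sets is open). For density, however, you take a genuinely different route at the key step. The paper first reduces, via Lemma \ref{Simply}, to approximating a polynomial $g$, then defines the piecewise function $h$ equal to $g$ near $\tilde M$ and to $f_j$ near $Q_{i_0}$, and applies the several-variable Mergelyan-type Lemma \ref{Merg_1} \emph{once} to $h$ on the product compact set $\tilde F\times\prod S_i$ to get a single polynomial $p$ doing both jobs simultaneously. You instead build the approximant explicitly as $(1-Q)P_1+Qf_j$, where $q$ is a one-variable polynomial ``switch'' close to $0$ on $\widetilde M_{i_0}$ and to $1$ on $Q_{i_0}$; this replaces the multivariate piecewise-Mergelyan step by the classical one-variable Mergelyan theorem ($A_D(L)=A(L)$ for planar $L$) plus Lemma \ref{Simply}, at the cost of the $\delta$-after-$P_1$ bookkeeping. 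Both arguments then finish identically by noting that the partial sums of a polynomial stabilize. Two small remarks: the ``topological enlargement'' you single out as the delicate point is in fact unnecessary here --- the exhausting sets $M_p$ are by construction products of planar compacta with connected complements, and the union of two \emph{disjoint} planar compacta each having connected complement automatically has connected complement (Janiszewski's theorem; this is also what the paper uses implicitly when it asserts that $S_{i_0}=\tilde M_{i_0}\cup Q_{i_0}$ has connected complement). Also, your claim that $S_n(g,w,\zeta)(z)=g(w,z)$ for all $\zeta$ once $n\geq n_1$ deserves the one extra line the paper gives it (the index bound $n'$ must be chosen uniformly in $\zeta$, which works because the relevant multi-indices are controlled by the degree of $g$ in $z$, independently of the centre).
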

	\begin{proof}
		By Lemma \ref{Open_1} the sets $E(\tau, p, m, j, s, n) \bigcap F(\tau, p, s, n), n \in \mu$ are open. Therefore the same is true for the union $\bigcup\limits_{n \in \mu}\big[E(\tau, p, m, j, s, n) \bigcap F(\tau, p, s, n)\big]$. We shall prove that this set is also dense. By Lemma \ref{Simply}, it suffices to show that it is dense in the set of polynomials of $r + d$ variables. Let $g$ be a polynomial. Let also $\tilde{F} = \displaystyle\prod_{i = 1}^r \tilde{F}_i, \tilde{M} = \displaystyle\prod_{i = 1}^d \tilde{M}_i$ where $\tilde{F}_i, i = 1, \dots, r$ are compact subsets of $G_i$ with connected complement and $\tilde{M}_i, i = 1, \dots, d$ are compact subsets of $\Omega_i$ with connected complement and $\varepsilon > 0$. We may also assume that $F_\tau \subseteq \tilde{F}$. It suffices to find $n \in \mu$ and $f \in E(\tau, p, m, j, s, n) \bigcap F(\tau, p, s, n)$, such that $$\sup\limits_{w \in \tilde{F}}\sup\limits_{z \in \tilde{M}}|g(w, z) - f(w, z)| < \varepsilon.$$
		The set $T_m$ is of the form $T_m = \displaystyle\prod_{i = 1}^{d} Q_i$, where one of the sets $Q_i$ satisfies $\Omega_i \bigcap Q_i = \emptyset$, which we denote by $Q_{i_0}$. Since $\tilde{M}_{i_0} \subseteq \Omega_{i_0}$, we have $\tilde{M}_{i_0} \bigcap Q_{i_0}  = \emptyset.$ For $i = 1, \dots, d$ with $i \neq i_0$ let $B_i$ be closed balls such that $\tilde{M}_i \bigcup Q_i \subseteq B_i$. We define the function $h : \tilde{F} \times \displaystyle\prod_{i = 1}^d S_i \rightarrow \mathbb{C}$, where $S_i = B_i$ for $i \neq i_0$ and $S_{i_0} = \tilde{M}_{i_0} \bigcup Q_{i_0}$, with: 
		\begin{align*}
		& h(w, z) = g(w, z) \text{ for } w \in \tilde{F} \text{ and } z \text{ in the product of } B_i \text{ for } i \neq i_0 \text{ and } \tilde{M}_{i_0}, \\
		& h(w, z) = f_j(w, z) \text{ for } w \in \tilde{F} \text{ and } z \text{ in the product of } B_i \text{ for } i \neq i_0 \text{ and } Q_{i_0}.
		\end{align*}
		We notice that the functions $g$ and $f_j$ are polynomials and thus defined everywhere. Since $h \in  A_D\big(\tilde{F} \times \displaystyle\prod_{i = 1}^d S_i\big)$ and the set $\tilde{F} \times \displaystyle\prod_{i = 1}^d S_i$ is a product of planar compact sets with connected complement, by Lemma \ref{Merg_1}, there exists a polynomial $p(w, z)$ such that $$\sup\limits_{w \in \tilde{F}} \sup\limits_{z \in \prod_{i = 1}^d S_i}|p(w, z) - h(w, z)| < \min\big\{\varepsilon, \frac{1}{s}\big\}.$$
		By the definitions of the function $h$ and the sets $S_i$ we get $\sup\limits_{w \in \tilde{F}}\sup\limits_{z \in \tilde{M}}|g(w, z) - p(w, z)| < \varepsilon$ and $\sup\limits_{w \in F_\tau}\sup\limits_{z \in T_m}|p(w, z) - f_j(w, z)| < \frac{1}{s}$. For a fixed $\zeta_0 \in M_p$, the polynomial $p(w,z)$ can be written in the form $p(w,z) = \sum\limits_{k = 0}^{\infty}p_k(w)(z - \zeta_0)^{N_k}$ where $p_k$ are polynomials, such that all but finitely many of them are identically equal to 0. For $i = 1, \dots, d$ we set $l_i = \max\{N_k^{(i)} : N_k = (N_k^{(1)}, \dots, N_k^{(d)}), p_k \not\equiv 0, k = 0, 1, \dots \}$. Let $n'$ be an integer such that for each $k = 0, 1, \dots$ with $N_k^{(i)} \leq l_i$ for $i = 1, \dots, d$, we have $k \leq n'$. We notice that $S_n(p, w, \zeta)(z) = p(w, z)$ for all $\zeta \in M_p$ and $n \geq n'$. Thus, by choosing $n \in \mu$ such that $n \geq n'$ we have $$\sup\limits_{\zeta \in M_p}\sup\limits_{w \in F_\tau}\sup\limits_{z \in T_m}|S_n(p, w, \zeta)(z) - f_j(w, z)| < \frac{1}{s}$$
		and
		$$\sup_{\zeta \in M_p}\sup_{w \in F_\tau}\sup_{z \in M_p}\big|S_n(p, w, \zeta)(z) - p(w,z)\big| = 0 < \frac{1}{s}.$$
		This proves that the set $\bigcup\limits_{n \in \mu}\big[E(\tau, p, m, j, s, n) \bigcap F(\tau, p, s, n)\big]$ is indeed dense.
	\end{proof}
	\begin{theorem}
	\label{Main_1}
		Under the above assumptions and notation, the set $U^\mu(G, \Omega)$ is a $G_\delta$ and dense subset of the space $H(G \times \Omega)$ and contains a vector space except $0$, dense in $H(G \times \Omega)$.
	\end{theorem}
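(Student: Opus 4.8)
The plan is to prove the two assertions in turn. First, that $U^\mu(G,\Omega)$ is a dense $G_\delta$ set follows at once from what precedes: by Lemma~\ref{Inter_1} it equals the countable intersection $\bigcap_{\tau,p,m,j,s}\bigcup_{n\in\mu}\big[E(\tau,p,m,j,s,n)\cap F(\tau,p,s,n)\big]$, and by the lemma immediately preceding the theorem each set $\bigcup_{n\in\mu}\big[E(\tau,p,m,j,s,n)\cap F(\tau,p,s,n)\big]$ is open and dense in $H(G\times\Omega)$; since this space, with the topology of uniform convergence on compacta, is a complete metrizable space, Baire's Category Theorem gives the conclusion, and in particular $U^\mu(G,\Omega)\neq\emptyset$.

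For the vector-space statement I would begin by isolating two stability properties of $U^\mu(G,\Omega)$. Invariance under multiplication by a non-zero scalar $c$ is immediate, since $S_{\lambda_n}(cf,w,\zeta)=cS_{\lambda_n}(f,w,\zeta)$ and $h$ is admissible if and only if $c^{-1}h$ is. Invariance under addition of a polynomial is the essential one: if $f\in U^\mu(G,\Omega)$ and $P$ is a polynomial in $(w,z)$, then $f+P\in U^\mu(G,\Omega)$. Indeed $P$ has finite degree in the variables $z$, so $S_n(P,w,\zeta)(z)=P(w,z)$ identically in $(w,\zeta,z)$ as soon as $\{N_0,\dots,N_n\}$ contains all multi-indices of that degree; hence $S_{\lambda_n}(f+P,w,\zeta)=S_{\lambda_n}(f,w,\zeta)+P$ for all large $n$. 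Since $h-P$ is again admissible on $G\times K$ whenever $h$ is, the sequence $(\lambda_n)$ supplied by $f\in U^\mu(G,\Omega)$ for the target $h-P$ serves $f+P$ for the target $h$ on $G\times K$, and one still has $S_{\lambda_n}(f+P,\cdot,\cdot)\to f+P$ on $G\times M$.

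These properties, together with the density of the polynomials (Lemma~\ref{Simply}), reduce the construction of a dense subspace inside $U^\mu(G,\Omega)\cup\{0\}$ to the construction of a linearly independent sequence $u_1,u_2,\dots$ in $U^\mu(G,\Omega)$ all of whose non-zero finite linear combinations again lie in $U^\mu(G,\Omega)$. Given such a sequence, fix an enumeration $(P_k)$ of a countable dense family of polynomials with each polynomial occurring infinitely often, an increasing exhausting family of compacta of $G\times\Omega$ with associated seminorms $\rho_k$, and, using Lemma~\ref{Simply}, polynomials $Q_k$ with $\rho_k\big((u_k+Q_k)-P_k\big)<2^{-k}$. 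Then the functions $g_k:=u_k+Q_k$ are dense in $H(G\times\Omega)$, so their span $W$ is dense; and any non-zero $\sum c_kg_k=\sum c_ku_k+\sum c_kQ_k$ is a non-zero universal function (by the linear independence of the $u_k$ and the hypothesis on them) plus a polynomial, hence lies in $U^\mu(G,\Omega)$ by the stability property above. Thus $W\setminus\{0\}\subseteq U^\mu(G,\Omega)$, which is the desired subspace.

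It remains to build $\{u_k\}$, and this inductive construction is, I expect, the main obstacle. Assume $u_1,\dots,u_m$ have been found with $\operatorname{span}\{u_1,\dots,u_m\}\setminus\{0\}\subseteq U^\mu(G,\Omega)$; one must produce $u_{m+1}\in U^\mu(G,\Omega)$, outside that span, with $u_{m+1}+v\in U^\mu(G,\Omega)$ for every $v\in\operatorname{span}\{u_1,\dots,u_m\}$ (by scalar invariance this is precisely what preserves the property for $\operatorname{span}\{u_1,\dots,u_{m+1}\}$). This requires a point of $\bigcap_v\big(U^\mu(G,\Omega)-v\big)$, an intersection over the continuum of $v$ in a finite dimensional subspace, so Baire's theorem applied blindly in $H(G\times\Omega)$ is not enough. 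The approach I would take is to re-run, with the finitely many fixed functions $u_1,\dots,u_m$ built into the scheme, the very construction of a universal function over the sets $E(\tau,p,m,j,s,n)\cap F(\tau,p,s,n)$ carried out in the density argument above (using Lemmas~\ref{Merg_1} and~\ref{Simply} for the approximations), arranging that along a common subsequence the partial sums of any prescribed combination of $u_1,\dots,u_m$ converge to a chosen admissible function while those of $u_{m+1}$ provide the remaining degree of freedom; the polynomial invariance then ensures, once this set of candidates is seen to be non-empty, that it is in fact dense, so that $u_{m+1}$ can moreover be taken outside $\operatorname{span}\{u_1,\dots,u_m\}$. With $\{u_k\}$ constructed, the reduction above concludes the proof.
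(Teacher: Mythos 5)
Your first paragraph reproduces the paper's argument for the topological part exactly: Lemma \ref{Inter_1}, the openness and density lemmas, and Baire's theorem in the complete metrizable space $H(G\times\Omega)$. That part is fine.

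For the algebraic part the paper gives no proof at all (it refers to the implication $(3)\Rightarrow(4)$ of Theorem 3 in \cite{Bayart}), so your attempt to supply one is welcome, and your reductions are correct and in the spirit of that reference: invariance of $U^\mu(G,\Omega)$ under non-zero scalars; invariance under adding a polynomial $P$, since $S_n(P,w,\zeta)(z)=P(w,z)$ for all $(w,\zeta,z)$ once $\{N_0,\dots,N_n\}$ contains the finite box of multi-indices bounded by the partial degrees of $P$ in $z$ (the same observation used in the density lemma), while the admissible targets $h$ are stable under subtracting $P$; and the deduction that a linearly independent family $u_1,u_2,\dots$ whose non-trivial finite combinations all lie in $U^\mu(G,\Omega)$ yields, after perturbing each $u_k$ by a polynomial chosen via Lemma \ref{Simply}, a dense subspace contained in $U^\mu(G,\Omega)\cup\{0\}$.

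The genuine gap is the one you flag yourself: the existence of the family $\{u_k\}$. At stage $m+1$ you need one function lying in the uncountable intersection $\bigcap_{v}\big(U^\mu(G,\Omega)-v\big)$ over $v$ in an $m$-dimensional subspace, and, as you correctly note, Baire gives nothing here. But your proposed remedy --- ``re-run the construction with $u_1,\dots,u_m$ built into the scheme, arranging that along a common subsequence\dots'' --- is a restatement of the goal, not an argument. The whole difficulty is that a single sequence of indices $\lambda_n$ must serve simultaneously for all scalar tuples $(c_1,\dots,c_m)$, whereas the density proof for $\bigcup_{n\in\mu}[E\cap F]$ handles one approximation at a time and gives no control of $S_{\lambda_n}(c_1u_1+\cdots+c_mu_m,w,\zeta)$ uniformly in the tuple. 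The resolution in \cite{Bayart} is an explicit block construction: the $u_j$ are assembled from finitely supported pieces occupying pairwise disjoint segments of the enumeration $N_k$, chosen inductively so that at the partial-sum indices assigned to $u_j$ the contributions of the remaining $u_i$ are already fixed and can be absorbed using exactly your polynomial-translation invariance; adapting this to the present setting (where the coefficients $a_k(f,w,\zeta)$ depend on the parameter $w$ and the center $\zeta$) is precisely the content the paper delegates to the citation. Your observation that the candidate set for $u_{m+1}$ is dense once non-empty (being stable under adding polynomials) is correct, but non-emptiness is exactly what remains unproved, so the algebraic-genericity half of the theorem is not established by your proposal.
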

	\begin{proof}
		The fact the set $U^\mu(G, \Omega)$ is a $G_\delta$ and dense set is obvious by combining the previous lemmas with Baire's Theorem. The proof that it also contains a dense vector space except $0$ uses the first part of Theorem \ref{Main_1}, follows the lines of the implication $(3)$ implies $(4)$ of the proof of Theorem 3 in \cite{Bayart} and is omitted.
	\end{proof}
	We now consider the case where the center of expansion of the Taylor series of $f$ does not vary but is a fixed point in $\Omega$.
	\begin{definition}
		Let $\zeta_0 \in \Omega, N_k, k = 0, 1, \dots$ be an enumeration of $\mathbb{N}^d$ and $\mu$ be an infinite subset of $\mathbb{N}$. We define $U^\mu(G, \Omega, \zeta_0)$ to be the set of $f \in H(G \times \Omega)$ such that for every set $K = \displaystyle\prod_{i = 1}^{d} K_i$ where $K_i \subseteq \mathbb{C}$ are compact with $\mathbb{C} \setminus K_i$ connected for all $i = 1, \dots d$ and there is at least one $i_0 \in \{1, \dots, d\}$ such that $\Omega_{i_0} \bigcap K_{i_0} = \emptyset$ and every continuous function $h : G \times K \rightarrow \mathbb{C}$ such that for every fixed $w \in G$, the function $K \ni z \rightarrow h(w, z) \in \mathbb{C}$ belongs to $A_D(K)$ and for every fixed $z \in K$, the function $G \ni w \rightarrow h(w, z) \in \mathbb{C}$ belongs to $H(G)$, there exists a strictly increasing sequence $\lambda_n \in \mu, n = 1, 2, \dots$ such that 
		\begin{align*}
		& \sup_{w \in L}\sup_{z \in K}\big|S_{\lambda_n}(f, w, \zeta_0)(z) - h(w,z)\big| \longrightarrow 0 \text{ and } \\
		& \sup_{w \in L}\sup_{z \in M}\big|S_{\lambda_n}(f, w, \zeta_0)(z) - f(w,z)\big| \longrightarrow 0
		\end{align*}
		for every compact subset $L$ of $G$ and for every compact subset of $M$ of $\Omega$.
	\end{definition}
	We notice that the class $U^\mu(G, \Omega, \zeta_0)$ can also be defined without the requirement that the sequence  $(\lambda_n)_{n \in \mathbb{N}}$ is strictly increasing and then the two definitions are equivalent; see \cite{Vlachou}.
	\begin{theorem}
		Under the above assumptions and notation, the set $U^\mu(G, \Omega, \zeta_0)$ is a $G_\delta$ and dense subset of the space $H(G \times \Omega)$ and contains a vector space except $0$, dense in $H(G \times \Omega)$.
	\end{theorem}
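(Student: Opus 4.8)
The plan is to deduce most of the statement directly from Theorem \ref{Main_1}, and to establish the $G_\delta$ property by replaying, in a simplified form, the Baire scheme already used for $U^\mu(G,\Omega)$.

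First I would record the inclusion $U^\mu(G,\Omega)\subseteq U^\mu(G,\Omega,\zeta_0)$. Fix $f\in U^\mu(G,\Omega)$ together with admissible data $K$, $h$. By the definition of $U^\mu(G,\Omega)$ there is a single strictly increasing sequence $(\lambda_n)\subseteq\mu$, depending only on $f$, $K$, $h$, such that the two displayed suprema (the ones carrying the extra supremum over $\zeta$ in an arbitrary compact $M\subseteq\Omega$) tend to $0$ for every compact $L\subseteq G$ and every compact $M\subseteq\Omega$. Given arbitrary compact sets $L\subseteq G$ and $M\subseteq\Omega$, apply this with $L$ and with the compact set $M\cup\{\zeta_0\}\subseteq\Omega$ (here we use $\zeta_0\in\Omega$) and then specialize $\zeta=\zeta_0$ in both suprema:
\[
\sup_{w\in L}\sup_{z\in K}\bigl|S_{\lambda_n}(f,w,\zeta_0)(z)-h(w,z)\bigr|\to0,\qquad \sup_{w\in L}\sup_{z\in M}\bigl|S_{\lambda_n}(f,w,\zeta_0)(z)-f(w,z)\bigr|\to0 .
\]
Hence $f\in U^\mu(G,\Omega,\zeta_0)$. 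By Theorem \ref{Main_1} the set $U^\mu(G,\Omega)$ is dense in $H(G\times\Omega)$ and contains a vector space except $0$ which is dense in $H(G\times\Omega)$; by the inclusion just proved, $U^\mu(G,\Omega,\zeta_0)$ inherits both properties.

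It remains to show that $U^\mu(G,\Omega,\zeta_0)$ is a $G_\delta$ subset of $H(G\times\Omega)$. For this I would repeat the arguments of Lemmas \ref{Inter_1} and \ref{Open_1} with the fixed center $\zeta_0$ in place of the varying $\zeta$ and with the supremum over $\zeta\in M_p$ simply deleted. Keeping the exhausting families $F_\tau$, $M_p$, the sets $T_m$ and the enumeration $(f_j)$ of rational polynomials, put
\[
E(\tau,m,j,s,n)=\Bigl\{f\in H(G\times\Omega):\ \sup_{w\in F_\tau}\sup_{z\in T_m}\bigl|S_n(f,w,\zeta_0)(z)-f_j(w,z)\bigr|<\tfrac1s\Bigr\}
\]
and
\[
F(\tau,p,s,n)=\Bigl\{f\in H(G\times\Omega):\ \sup_{w\in F_\tau}\sup_{z\in M_p}\bigl|S_n(f,w,\zeta_0)(z)-f(w,z)\bigr|<\tfrac1s\Bigr\}.
\]
Exactly as in Lemma \ref{Inter_1} — the proof is in fact shorter, since there is no supremum over centers to carry along, and one uses Lemma \ref{Merg_1} on $F_\tau\times K$ together with a choice of $m_0$ with $K\subseteq T_{m_0}$ — one obtains
\[
U^\mu(G,\Omega,\zeta_0)=\bigcap_{\tau,p,m,j,s}\ \bigcup_{n\in\mu}\bigl[E(\tau,m,j,s,n)\cap F(\tau,p,s,n)\bigr].
\]
Each $E(\tau,m,j,s,n)$ and $F(\tau,p,s,n)$ is open in $H(G\times\Omega)$: by the Weierstrass/Cauchy estimates used in Lemma \ref{Open_1}, $f\mapsto S_n(f,w,\zeta_0)(z)$ depends continuously on $f$, uniformly for $(w,z)$ in the bounded set $F_\tau\times T_m$ (resp. $F_\tau\times M_p$), so the complements are (sequentially, hence, by metrizability, genuinely) closed. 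Therefore the right-hand side above is a countable intersection of open sets, i.e. a $G_\delta$ set, which finishes the proof.

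I do not expect a real obstacle here: the only point requiring a little care is the legitimacy of specializing the varying center to $\zeta_0$, which is exactly the observation that $\{\zeta_0\}$ is an admissible compact subset of $\Omega$ and that the universal subsequence in the definition of $U^\mu(G,\Omega)$ is chosen independently of the auxiliary compacta $L$ and $M$. Alternatively, one can bypass the inclusion entirely and rerun the full three-lemma argument (the $G_\delta$-decomposition, openness, density, Baire, and the $(3)\Rightarrow(4)$ construction of \cite{Bayart}) verbatim with $\zeta_0$ fixed; the density lemma goes through unchanged, because the approximating polynomial $p$ satisfies $S_n(p,w,\zeta_0)(z)=p(w,z)$ for all large $n$, so the $F$-condition is met with error $0$.
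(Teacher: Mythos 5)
Your proposal is correct and follows essentially the same route as the paper: deduce density and the dense vector space from the inclusion $U^\mu(G,\Omega)\subseteq U^\mu(G,\Omega,\zeta_0)$ together with Theorem \ref{Main_1}, and obtain the $G_\delta$ property by rewriting $U^\mu(G,\Omega,\zeta_0)$ as $\bigcap_{\tau,p,m,j,s}\bigcup_{n\in\mu}\bigl[E(\tau,m,j,s,n)\cap F(\tau,p,s,n)\bigr]$ with the fixed center $\zeta_0$ and repeating the arguments of Lemmas \ref{Inter_1} and \ref{Open_1}. The only difference is that you spell out the specialization $\zeta=\zeta_0$ (via the admissible compact $M\cup\{\zeta_0\}$), which the paper dismisses as ``clearly''; this is a harmless elaboration.
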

	\begin{proof}
		Clearly, we have $U^\mu(G, \Omega) \subseteq U^\mu(G, \Omega, \zeta_0)$, so $U^\mu(G, \Omega, \zeta_0)$ contains a $G_\delta$ and dense set and thus is dense. If we prove that it can be written as a countable intersection of open sets in $H(G \times \Omega)$ then it would be a $G_\delta$ and dense set itself.
		
		Let $F_\tau, M_p, T_m$ and $f_j$ be as previously. For any $\tau, p, m, j, s, n$ with $\tau, p, m, j, s \geq 1, n \geq 0$, we denote by $E(\tau, m, j, s, n)$ and $F(\tau, p, s, n)$ the sets $$E(\tau, m, j, s, n) := \Big\{f \in H(G \times \Omega) : \sup_{w \in F_\tau}\sup_{z \in T_m}\big|S_n(f, w, \zeta_0)(z) - f_j(w,z)\big| < \frac{1}{s}\Big\},$$ \\
		$$F(\tau, p, s, n) := \Big\{f \in H(G \times \Omega) : \sup_{w \in F_\tau}\sup_{z \in M_p}\big|S_n(f, w, \zeta_0)(z) - f(w,z)\big| < \frac{1}{s}\Big\}.$$
		Following the lines of implication of the proofs of Lemma \ref{Inter_1} and Lemma \ref{Open_1} we can show that
		$$U^{\mu}(G, \Omega, \zeta_0)=\bigcap_{\tau, p, m, j, s} \ \ \bigcup_{n\in \mu} \big[E(\tau, m, j, s, n) \bigcap F(\tau, p, s, n)\big].$$
		and that the sets $E(\tau, m, j, s, n)$ and $F(\tau, p, s, n)$ are open in the space $H(G \times \Omega)$ for all $\tau \geq 1, p \geq 1, m \geq 1, j \geq 1, s \geq 1$ and $n \in \mu$. Thus, $U^\mu(G, \Omega, \zeta_0)$ is a $G_\delta$ and dense set. Also, by Theorem \ref{Main_1} and the fact that $U^\mu(G, \Omega) \subseteq U^\mu(G, \Omega, \zeta_0)$, $U^\mu(G, \Omega, \zeta_0)$ contains a vector space except $0$, dense in $H(G \times \Omega)$.
	\end{proof}
	\begin{remark}
	\label{Many}
		We have proven that, for a fixed enumeration of $\mathbb{N}^d$, the set of functions in $H(G \times \Omega)$ whose Taylor series have the desired universal approximation property with respect to this enumeration is a $G_\delta$ and dense set, as described above. Using Baire's Theorem, if we consider the set of functions in $H(G \times \Omega)$ whose Taylor series have the same universal approximation property with respect to any countable family of enumerations of $\mathbb{N}^d$, then this is still a $G_\delta$ and dense set of $H(G \times \Omega)$. Since the set of all enumerations of $\mathbb{N}^d$ is uncountable, a natural question that arises is whether we can generalise the result when the functions in $H(G \times \Omega)$ have the universal approximation property with respect to all enumerations of $\mathbb{N}^d$. The answer to this question is negative. The proof is similar to a result in Section 6 of \cite{Kioulafa}.
	\end{remark}
	\section{Strong Universal Taylor series with parameters in $H(G \times \Omega)$}
	Let $G_i \subseteq \mathbb{C}, i = 1, \dots, r$ and $\Omega_i \subseteq \mathbb{C}, i = 1, \dots, d$ be simply connected domains. We set $G = \displaystyle\prod_{i = 1}^{r} G_i$ and $\Omega = \displaystyle\prod_{i = 1}^{d} \Omega_i$.
	\begin{definition}
		Let $N_k, k = 0, 1, \dots$ be an enumeration of $\mathbb{N}^d$ and $\mu$ be an infinite subset of $\mathbb{N}$. We define $U^\mu_H(G, \Omega)$ to be the set of $f \in H(G \times \Omega)$ such that for every set $K = \displaystyle\prod_{i = 1}^{d} K_i$ where $K_i \subseteq \mathbb{C}$ are compact with $\mathbb{C} \setminus K_i$ connected for all $i = 1, \dots d$ and there is at least one $i_0 \in \{1, \dots, d\}$ such that $\Omega_{i_0} \bigcap K_{i_0} = \emptyset$ and every holomorphic function $h : G \times V \rightarrow \mathbb{C}$ where $V$ is an open set containing $K$, there exists a strictly increasing sequence $\lambda_n \in \mu, n = 1, 2, \dots$ such that 
		\begin{align*}	
		& \sup_{\zeta \in M}\sup_{w \in L}\sup_{z \in K}\big|D \big(S_{\lambda_n}(f, w, \zeta)(z) - h(w,z)\big)\big| \longrightarrow 0 \text{ and } \\
		& \sup_{\zeta \in M}\sup_{w \in L}\sup_{z \in M}\big|S_{\lambda_n}(f, w, \zeta)(z) - f(w,z)\big| \longrightarrow 0
		\end{align*}
		for every compact subset $L$ of $G$, for every compact subset of $M$ of $\Omega$ and every differential operator $D$ of mixed partial derivatives in $(w, z)$.
	\end{definition}
	We notice that the class $U^\mu_H(G, \Omega)$ can also be defined without the requirement that the sequence  $(\lambda_n)_{n \in \mathbb{N}}$ is strictly increasing and then the two definitions are equivalent; see \cite{Vlachou}.
	
	For every positive integer $l$, let $\mathcal{F}_l$ be the set of mixed partial derivatives $D = \dfrac{\partial^{m_1 + \dots + m_{r + d}}}{\partial w_1^{m_1} \cdots \partial w_r^{m_r} \cdot \partial z_1^{m_{r + 1}} \cdots \partial z_d^{m_{r + d}}}$ such that $\sum\limits_{i = 1}^{r + d}m_i \leq l$.
	
	We keep the notation of the sets $M_p, F_\tau$ and $T_m$ and the polynomials $f_j$. For any $\tau, p, m, l, j, s, n$ with $\tau, p, m, l, j, s \geq 1, n \geq 0$, we denote by $E(\tau, p, m, l, j, s, n)$ the set $$\Big\{f \in H(G \times \Omega) : \max_{D \in \mathcal{F}_l}\sup_{\zeta \in M_p}\sup_{w \in F_\tau}\sup_{z \in T_m}\big|D \big(S_n(f, w, \zeta)(z) - f_j(w,z)\big)\big| < \frac{1}{s}\Big\},$$ and by $F(\tau, p, s, n)$ the set 
	$$\Big\{f \in H(G \times \Omega) : \sup_{\zeta \in M_p}\sup_{w \in F_\tau}\sup_{z \in M_p}\big|S_n(f, w, \zeta)(z) - f(w,z)\big| < \frac{1}{s}\Big\}.$$
	\begin{lemma}
		\label{Inter_2}
		With the above assumptions,
		$$U^\mu_H(G, \Omega)=\bigcap_{\tau, p, m, l, j, s} \ \ \bigcup_{n\in \mu} \big[E(\tau, p, m, l, j, s, n) \bigcap F(\tau, p, s, n)\big].$$
	\end{lemma}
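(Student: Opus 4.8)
The plan is to reproduce the two-sided argument of Lemma \ref{Inter_1}, with the sole structural changes that the strong (all-derivatives) approximation forces us to invoke Lemma \ref{Merg_2} in place of Lemma \ref{Merg_1}, and that the new index $l$ bounding the order of the operators is handled by letting it run to infinity along the diagonal $l=\tau$. For the inclusion $U^\mu_H(G,\Omega)\subseteq\bigcap_{\tau,p,m,l,j,s}\bigcup_{n\in\mu}[E(\tau,p,m,l,j,s,n)\cap F(\tau,p,s,n)]$, fix $f\in U^\mu_H(G,\Omega)$ and arbitrary positive integers $\tau,p,m,l,j,s$. By construction $T_m$ is of the form admissible for $K$ in the definition of $U^\mu_H(G,\Omega)$ (one factor lying in $\mathbb{C}\setminus\Omega_{i_0}$, the others closed disks, all with connected complement), and the polynomial $f_j$ is entire, hence holomorphic on some open $V\supseteq T_m$ and on $G$ in the $w$-variable; so the definition of $U^\mu_H(G,\Omega)$ applies with $K=T_m$, $h=f_j$, $L=F_\tau$, $M=M_p$. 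Since $\mathcal F_l$ is \emph{finite}, a single sequence $(\lambda_n)_n\subseteq\mu$ works for all $D\in\mathcal F_l$ simultaneously, giving
$$\max_{D\in\mathcal F_l}\sup_{\zeta\in M_p}\sup_{w\in F_\tau}\sup_{z\in T_m}\bigl|D\bigl(S_{\lambda_n}(f,w,\zeta)(z)-f_j(w,z)\bigr)\bigr|\longrightarrow 0$$
together with $\sup_{\zeta\in M_p}\sup_{w\in F_\tau}\sup_{z\in M_p}|S_{\lambda_n}(f,w,\zeta)(z)-f(w,z)|\to0$. Taking $n_0$ so large that both quantities are $<1/s$ at $n=n_0$ and setting $n'=\lambda_{n_0}\in\mu$ yields $f\in E(\tau,p,m,l,j,s,n')\cap F(\tau,p,s,n')$.

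For the reverse inclusion, let $f$ lie in the right-hand side, let $K=\prod_{i=1}^d K_i$ be admissible, and let $h\in H(G\times V)$ with $V\supseteq K$ open. For each positive integer $\tau$ the set $F_\tau\times K$ is a product of $r+d$ planar compact sets with connected complements, and $h$ is holomorphic on a neighbourhood of it inside $G\times V$; hence Lemma \ref{Merg_2}, applied with the finite family of multi-indices $m\in\mathbb{N}^{r+d}$ with $|m|\le\tau$ (that is, with $\mathcal F_\tau$), provides a polynomial $q_\tau$ with $\sup_{w\in F_\tau}\sup_{z\in K}|D(h-q_\tau)|<\tfrac1{4\tau}$ for all $D\in\mathcal F_\tau$. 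On the fixed finite-dimensional space of polynomials of degree $\le\deg q_\tau$ the $C^\tau$-seminorms on $F_\tau\times K$ depend continuously on the coefficients, so we may perturb $q_\tau$ to a polynomial $f_{j_\tau}$ with rational coefficients satisfying $\sup_{w\in F_\tau}\sup_{z\in K}|D(h-f_{j_\tau})|<\tfrac1{2\tau}$ for all $D\in\mathcal F_\tau$; as this does not involve $\zeta$, it remains valid with $\sup_{\zeta\in M_\tau}$ prefixed. Choose $m_0$ with $K\subseteq T_{m_0}$ (possible by the construction of the $T_m$'s). Since $f\in\bigcup_{n\in\mu}[E(\tau,\tau,m_0,\tau,j_\tau,2\tau,n)\cap F(\tau,\tau,2\tau,n)]$, there is $\lambda_\tau\in\mu$ with
$$\max_{D\in\mathcal F_\tau}\sup_{\zeta\in M_\tau}\sup_{w\in F_\tau}\sup_{z\in T_{m_0}}\bigl|D\bigl(S_{\lambda_\tau}(f,w,\zeta)(z)-f_{j_\tau}(w,z)\bigr)\bigr|<\frac1{2\tau}$$
and $\sup_{\zeta\in M_\tau}\sup_{w\in F_\tau}\sup_{z\in M_\tau}|S_{\lambda_\tau}(f,w,\zeta)(z)-f(w,z)|<\tfrac1{2\tau}$. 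Restricting the first display to $z\in K\subseteq T_{m_0}$ and combining it with the estimate for $f_{j_\tau}$ via the triangle inequality gives $\max_{D\in\mathcal F_\tau}\sup_{\zeta\in M_\tau}\sup_{w\in F_\tau}\sup_{z\in K}|D(S_{\lambda_\tau}(f,w,\zeta)(z)-h(w,z))|<\tfrac1\tau$.

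It remains to conclude. Given compact sets $L\subseteq G$, $M\subseteq\Omega$ and a differential operator $D$ of mixed partial derivatives in $(w,z)$, pick $\tau_0$ so large that $L\subseteq F_{\tau_0}$, $M\subseteq M_{\tau_0}$ and $D\in\mathcal F_{\tau_0}$; since the exhausting families are increasing and $\mathcal F_\tau$ grows with $\tau$, the two estimates above hold with $L,M,D$ in place of $F_\tau,M_\tau,\mathcal F_\tau$ for every $\tau\ge\tau_0$, so $\sup_{\zeta\in M}\sup_{w\in L}\sup_{z\in K}|D(S_{\lambda_\tau}(f,w,\zeta)(z)-h(w,z))|\to0$ and $\sup_{\zeta\in M}\sup_{w\in L}\sup_{z\in M}|S_{\lambda_\tau}(f,w,\zeta)(z)-f(w,z)|\to0$ as $\tau\to\infty$. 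Hence $f\in U^\mu_H(G,\Omega)$, and the two inclusions together give the claimed identity.

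Beyond the (routine) bookkeeping of the seven indices, I expect the only genuinely delicate points to be the two new ingredients relative to Lemma \ref{Inter_1}: first, the passage from the polynomial delivered by Lemma \ref{Merg_2} to one with rational coefficients without destroying the finitely many prescribed derivative estimates, which is exactly why one must argue via continuity of the $C^\tau$-seminorms on a fixed finite-dimensional space of polynomials; and second, the diagonal choice $l=\tau$ in the reverse inclusion, which is what ensures that each fixed operator $D$ is eventually captured by $\mathcal F_\tau$ so that the strong universal approximation holds for all $D$ at once.
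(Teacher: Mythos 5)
Your proof is correct and follows essentially the same route as the paper's: the same application of the definition with $K=T_m$, $h=f_j$ for the forward inclusion, Lemma \ref{Merg_2} with the finite family $\mathcal{F}_\tau$ for the reverse inclusion, and the diagonal choice $l=\tau$ to capture every operator $D$ eventually. The only addition is your explicit perturbation to a rational-coefficient polynomial $f_{j_\tau}$, a step the paper silently absorbs into the phrase ``there exists a positive integer $j_\tau$''; your continuity-of-seminorms argument is a correct way to justify it.
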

	\begin{proof}
		For the inclusion $U^\mu_H(G, \Omega) \subseteq \bigcap\limits_{\tau, p, m, l, j, s} \ \ \bigcup\limits_{n\in \mu} \big[E(\tau, p, m, l, j, s, n) \bigcap F(\tau, p, s, n)\big]$, we consider a function $f\in U^\mu_H(G, \Omega)$ and arbitrary positive integers $\tau, p, m, l, j, s$.
		We shall show that there exists an integer $n' \in \mu$ such that $f \in E(\tau, p, m, l, j, s, n') \bigcap F(\tau, p, s, n')$. Since $f_j$ is a polynomial, it is holomorphic on $G \times \mathbb{C}^d.$ Thus, using the definition of the class $U^\mu_H(G, \Omega)$, we fix a sequence $(\lambda_n)_{n \in \mathbb{N}},\lambda_n\in \mu$, such that 
		$$\sup_{\zeta \in M_p}\sup_{w \in F_\tau}\sup_{z \in T_m}\big|D \big(S_n(f, w, \zeta)(z) - f_j(w,z)\big)\big| \to 0 \ \text{as} \ n \to \infty$$ 
		for every differential operator $D$ of mixed partial derivatives in $(w, z)$ and 
		$$\sup_{\zeta \in M_p}\sup_{w \in F_\tau}\sup_{z \in M_p}\big|S_{\lambda_n}(f, w, \zeta)(z) - f(w,z)\big| \to 0 \ \text{as} \ n \to \infty \ .$$ 
		So, we get 
		$$\max_{D \in \mathcal{F}_l}\sup_{\zeta \in M_p}\sup_{w \in F_\tau}\sup_{z \in T_m}\big|D \big(S_n(f, w, \zeta)(z) - f_j(w,z)\big)\big| \to 0 \ \text{as} \ n \to \infty \ .$$ 
		Thus, there exist positive integers $n_1, n_2$, such that 
		$$\max_{D \in \mathcal{F}_l}\sup_{\zeta \in M_p}\sup_{w \in F_\tau}\sup_{z \in T_m}\big|D \big(S_n(f, w, \zeta)(z) - f_j(w,z)\big)\big|<\frac{1}{s}$$ for every $n \geq n_1$ and $$\sup_{\zeta \in M_p}\sup_{w \in F_\tau}\sup_{z \in M_p}\big|S_{\lambda_n}(f, w, \zeta)(z) - f(w,z)\big| < \frac{1}{s}$$ for every $n \geq n_2$. By setting $n_0 = \max\{n_1, n_2\}$ and $n' = \lambda_{n_0}$ we get $f\in $ \break $E(\tau, p, m, l, j, s, n') \bigcap F(\tau, p, s, n')$ which is exactly what we wanted to show.
		
		For the other inclusion, we consider a function 
		$$f\in \bigcap\limits_{\tau, p, m, l, j, s} \ \ \bigcup\limits_{n\in \mu} \big[E(\tau, p, m, l, j, s, n) \bigcap F(\tau, p, s, n)\big] \ .$$
		Let $K = \displaystyle\prod_{i = 1}^{d} K_i$ where $K_i \subseteq \mathbb{C}$ are compact with $\mathbb{C} \setminus K_i$ connected for all $i = 1, \dots d$ and there is at least one $i_0 \in \{1, \dots, d\}$ such that $\Omega_{i_0} \bigcap K_{i_0} = \emptyset$. Let also  $h : G \times V \rightarrow \mathbb{C}$ be a holomorphic function where $V$ is an open set containing $K$. 
		Then, for each positive integer $\tau$, by Lemma \ref{Merg_2}, there exists a positive integer $j_\tau$, such that 
		$$\sup_{w\in F_{\tau}}\sup_{z\in K}\left|D \big(h(w, z)-f_{j_\tau}(w, z)\big)\right|<\frac{1}{2\tau}$$ 
		for every $D \in \mathcal{F}_\tau$. Since the above relation does not depend on $\zeta$, it can be rewritten as
		$$\max_{D \in \mathcal{F}_\tau}\sup_{\zeta \in M_\tau}\sup_{w\in F_{\tau}}\sup_{z\in K}\left|D \big(h(w, z)-f_{j_\tau}(w, z)\big)\right|<\frac{1}{2\tau} \ . \ (1)$$
		By the definition of the sets $T_m$, there exists a positive integer $m_0$, such that $K \subseteq T_{m_0}$. For each positive integer $\tau$ we have $f \in \bigcup\limits_{n\in \mu} \big[E(\tau, \tau, m_0, \tau, j_\tau, 2 \tau, n) \bigcap F(\tau, \tau, 2 \tau, n)\big]$, thus, there exists a positive integer $\lambda_{\tau}\in \mu$, such that 
		$$\max_{D \in \mathcal{F}_\tau}\sup_{\zeta\in M_{\tau}}\sup_{w\in F_{\tau}}\sup_{z\in T_{m_0}} \left|D \big(S_{\lambda_{\tau}}(f,w,\zeta)(z)-f_{j_\tau}(w,z)\big)\right|<\frac{1}{2\tau}$$ 
		and $$\sup_{\zeta\in M_{\tau}}\sup_{w\in F_{\tau}}\sup_{z\in M_{\tau}} \left|S_{\lambda_{\tau}}(f,w,\zeta)(z)-f(w,z)\right|<\frac{1}{2\tau} \ . \ (2)$$
		Since $K \subset T_{m_0}$, it follows that 
		$$\max_{D \in \mathcal{F}_\tau}\sup_{\zeta\in M_{\tau}}\sup_{w\in F_{\tau}}\sup_{z\in K} \left|D \big(S_{\lambda_{\tau}}(f,w,\zeta)(z)-f_{j_\tau}(w,z)\big)\right|<\frac{1}{2\tau} \ . \ (3)$$
		From $(1)$ and $(3)$ it follows that for each positive integer $\tau$, we have
		$$\max_{D \in \mathcal{F}_\tau}\sup_{\zeta\in M_{\tau}}\sup_{w\in F_{\tau}}\sup_{z\in K} \left|D \big(S_{\lambda_{\tau}}(f,w,\zeta)(z)-h(w,z)\big)\right|<\frac{1}{\tau} \ . \ (4)$$
		
		We consider the sequence $(\lambda_\tau)_{\tau \in \mathbb{N}}$.
		Let $L \subseteq G, M \subseteq \Omega$ be compact sets and $D$ be a differential operator of mixed partial derivatives in $(w, z)$. Then, there exists a positive integer $\tau_0$, such that $L \subseteq F_{\tau_0}, M \subseteq M_{\tau_0}$ and $D \in \mathcal{F}_{\tau_0}$.
		Since the families $\{F_{\tau}:\tau=1,2,\ldots\}, \{M_p:p=1,2,\ldots\}$ and $\{\mathcal{F}_l:l=1,2,\ldots\}$ are increasing, it follows that $L\subseteq F_{\tau}, M\subseteq M_{\tau}$ and $D \in \mathcal{F}_\tau$ for any positive integer $\tau\geq \tau_0$, therefore, from $(4)$, it follows that 
		$$\sup_{\zeta\in M}\sup_{w\in L}\sup_{z\in K} \left|D \big(S_{\lambda_{\tau}}(f,w,\zeta)(z)-h(w,z)\big)\right|<\frac{1}{\tau} \ \text{for all} \ \tau\geq \tau_0$$
		whence it follows that 
		$$\sup_{\zeta\in M}\sup_{w\in L}\sup_{z\in K} \left|D \big(S_{\lambda_{\tau}}(f,w,\zeta)(z)-h(w,z)\big)\right|\to 0 \ \text{as} \ \tau\to \infty \ .$$
		From $(2)$, we get $$\sup_{\zeta\in M}\sup_{w\in L}\sup_{z\in M} \left|S_{\lambda_{\tau}}(f,w,\zeta)(z)-f(w,z)\right|<\frac{1}{2\tau} < \frac{1}{\tau} \ \text{for all} \ \tau\geq \tau_0$$
		and so $$\sup_{\zeta\in M}\sup_{w\in L}\sup_{z\in M} \left|S_{\lambda_{\tau}}(f,w,\zeta)(z)-f(w,z)\right| \to 0 \ \text{as} \ \tau\to \infty \ .$$
		Hence, $f\in U^\mu_H(G, \Omega)$ and the proof is completed.
	\end{proof}
	We consider the set $U^\mu_H(G, \Omega)$ as a subset of the space $H(G \times \Omega)$ endowed with the topology of uniform convergence on all compact subsets of $G \times \Omega$. Since $H(G \times \Omega)$ is a complete metrizable space, Baire's Theorem is at our disposal. So, if we prove that the sets $\bigcup_{n\in \mu} \big[E(\tau, p, m, l, j, s, n) \bigcap F(\tau, p, s, n)\big]$ are open and dense in $H(G \times \Omega)$ for all positive integers $\tau, p, m, l, j$ and $s$, then the set $U^\mu_H(G, \Omega)$ is a dense $G_\delta$ set.
	\begin{lemma}
		\label{Open_2}
		Let $\tau \geq 1, p \geq 1, m \geq 1, l \geq 1, j \geq 1, s \geq 1$ and $n \in \mu$. Then, 
		\begin{itemize}
			\item [(i)] the set $E(\tau, p, m, l, j, s, n)$ is open in the space $H(G \times \Omega)$,
			\item [(ii)] the set $F(\tau, p, s, n)$ is open in the space $H(G \times \Omega)$.
		\end{itemize}
	\end{lemma}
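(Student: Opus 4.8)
The plan is to follow the proof of Lemma~\ref{Open_1} almost verbatim, showing that the complements of $E(\tau,p,m,l,j,s,n)$ and of $F(\tau,p,s,n)$ are sequentially closed in $H(G\times\Omega)$; since $H(G\times\Omega)$ is metrizable, this suffices to conclude that the sets themselves are open. Part (ii) requires nothing new: the set $F(\tau,p,s,n)$ here is exactly the set appearing in Lemma~\ref{Open_1}(ii), so the argument given there applies word for word.

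For part (i), let $g_i\in H(G\times\Omega)\setminus E(\tau,p,m,l,j,s,n)$ be a sequence converging to some $g\in H(G\times\Omega)$ uniformly on compact subsets of $G\times\Omega$, and write
$$\Phi(f)=\max_{D\in\mathcal{F}_l}\ \sup_{\zeta\in M_p}\ \sup_{w\in F_\tau}\ \sup_{z\in T_m}\bigl|D\bigl(S_n(f,w,\zeta)(z)-f_j(w,z)\bigr)\bigr| .$$
I must show $\Phi(g)\geq 1/s$, knowing that $\Phi(g_i)\geq 1/s$ for all $i$. The first step is the Weierstrass theorem: every mixed partial derivative (in all $r+d$ variables) of $g_i$ converges, uniformly on compact subsets of $G\times\Omega$, to the corresponding derivative of $g$. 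In particular, for each $k$ with $0\leq k\leq n$, the coefficient function $a_k(\cdot,w,\zeta)=\gamma_{N_k}(\cdot,w,\zeta)$ and all of its partial derivatives in the $w$-variables converge uniformly for $(w,\zeta)$ in the compact set $F_\tau\times M_p\subseteq G\times\Omega$; here one uses that $\gamma_{N_k}$ is itself a partial-derivative functional in the $z$-variables.

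The second step is the observation that, for a fixed $D\in\mathcal{F}_l$, writing $D=\partial_w^{\alpha}\partial_z^{\beta}$ and applying the Leibniz rule to $S_n(f,w,\zeta)(z)=\sum_{k=0}^{n}a_k(f,w,\zeta)(z-\zeta)^{N_k}$, one obtains a finite sum of terms of the form $c\,\bigl(\partial_w^{\alpha}a_k(f,w,\zeta)\bigr)\bigl(\partial_z^{\beta}(z-\zeta)^{N_k}\bigr)$; in each term the $z$-factor is a polynomial, hence bounded on the bounded set $T_m$, while the $w$-factor converges uniformly on $F_\tau\times M_p$ by Step~1. Hence
$$\sup_{\zeta\in M_p}\ \sup_{w\in F_\tau}\ \sup_{z\in T_m}\bigl|D\bigl(S_n(g_i,w,\zeta)(z)-S_n(g,w,\zeta)(z)\bigr)\bigr|\longrightarrow 0\quad(i\to\infty)$$
for each $D\in\mathcal{F}_l$, and since $\mathcal{F}_l$ is finite the same holds after taking $\max_{D\in\mathcal{F}_l}$. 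Consequently $\Phi(g_i)\to\Phi(g)$, and, the left-hand sequence being $\geq 1/s$ term by term, we get $\Phi(g)\geq 1/s$. Thus $g$ lies in the complement of $E(\tau,p,m,l,j,s,n)$, which is therefore closed.

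The only point needing a little care — the ``main obstacle'', such as it is — is the bookkeeping in Step~2: one must keep track of the fact that a derivative $D\in\mathcal{F}_l$ splits into a $w$-part that only hits the coefficients $a_k(f,w,\zeta)$ (whose $w$-derivatives are controlled by Weierstrass on the compact set $F_\tau\times M_p$) and a $z$-part that only produces polynomials bounded on the bounded set $T_m$. Once this is set up, the finiteness of $\mathcal{F}_l$ and of the range $0\leq k\leq n$ make the interchange of limits routine, exactly as in Lemma~\ref{Open_1}.
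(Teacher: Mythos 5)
Your proof is correct and is precisely the argument the paper has in mind: the paper simply states that the proof "is similar to the proof of Lemma \ref{Open_1} and is omitted," and your write-up supplies exactly those details, with the only point of substance being the one you flag — that $D\in\mathcal{F}_l$ splits into a $w$-part acting on the coefficients $a_k$ (controlled by Weierstrass on $F_\tau\times M_p$) and a $z$-part producing polynomials bounded on $T_m$. (A minor cosmetic remark: since $a_k(f,w,\zeta)$ is independent of $z$ and $(z-\zeta)^{N_k}$ is independent of $w$, the "Leibniz sum" collapses to a single term per $k$, but this changes nothing.)
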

	\begin{proof}
		The proof is similar to the proof of Lemma \ref{Open_1} and is omitted.
	\end{proof}
	\begin{lemma}
		For every integer $\tau \geq 1, p \geq 1, m \geq 1, l \geq 1, j \geq 1$ and $s \geq 1$, the set $\bigcup\limits_{n \in \mu}\big[E(\tau, p, m, l, j, s, n) \bigcap F(\tau, p, s, n)\big]$ is open and dense in the space $H(G \times \Omega)$.
	\end{lemma}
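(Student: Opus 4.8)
The openness assertion is immediate from Lemma~\ref{Open_2}: for each $n\in\mu$ the set $E(\tau,p,m,l,j,s,n)\cap F(\tau,p,s,n)$ is open in $H(G\times\Omega)$, hence so is the union over $n\in\mu$. The substantive part is density, and the plan is to repeat the density argument used in Section~3 for the class $U^\mu(G,\Omega)$, replacing Lemma~\ref{Merg_1} by its derivative version Lemma~\ref{Merg_2} and carrying the finite family $\mathcal{F}_l$ of mixed partial derivatives through the estimates. By Lemma~\ref{Simply} it suffices to prove density inside the polynomials of $r+d$ variables, so I would fix a polynomial $g$, a product $\tilde F=\prod_{i=1}^r\tilde F_i$ of compacta $\tilde F_i\subseteq G_i$ with connected complements and a product $\tilde M=\prod_{i=1}^d\tilde M_i$ of compacta $\tilde M_i\subseteq\Omega_i$ with connected complements (enlarging if necessary so that $F_\tau\subseteq\tilde F$), and $\varepsilon>0$, and produce $n\in\mu$ and $f\in E(\tau,p,m,l,j,s,n)\cap F(\tau,p,s,n)$ with $\sup_{w\in\tilde F}\sup_{z\in\tilde M}|g(w,z)-f(w,z)|<\varepsilon$.

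First I would build the same patch function as in Section~3: writing $T_m=\prod_{i=1}^d Q_i$ with $\Omega_{i_0}\cap Q_{i_0}=\emptyset$ for a suitable $i_0$, I choose closed disks $B_i$ with $\tilde M_i\cup Q_i$ in the interior of $B_i$ for $i\neq i_0$, set $S_i=B_i$ for $i\neq i_0$ and $S_{i_0}=\tilde M_{i_0}\cup Q_{i_0}$, and let $h$ on $\tilde F\times\prod_i S_i$ equal $g$ on the slab over $\tilde M_{i_0}$ and $f_j$ on the slab over $Q_{i_0}$; as noted in Section~3, $\tilde F\times\prod_i S_i$ is again a product of planar compacta with connected complements.

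The one place where the argument genuinely departs from Section~3 --- and the step I expect to need the most care --- is that Lemma~\ref{Merg_2} requires $h$ to be holomorphic on a \emph{neighbourhood} of $\tilde F\times\prod_i S_i$, whereas in Section~3 membership of $h$ in $A_D$ was all that Lemma~\ref{Merg_1} needed. This upgrade is available for free: $\tilde M_{i_0}$ and $Q_{i_0}$ are disjoint compact subsets of $\mathbb{C}$, hence have disjoint open neighbourhoods $V_1\supseteq\tilde M_{i_0}$ and $V_2\supseteq Q_{i_0}$, and declaring $h=g$ on $G\times\prod_{i\neq i_0}\mathrm{int}(B_i)\times V_1$ and $h=f_j$ on $G\times\prod_{i\neq i_0}\mathrm{int}(B_i)\times V_2$ extends $h$ holomorphically to an open set containing $\tilde F\times\prod_i S_i$ (each piece being a polynomial). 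Then Lemma~\ref{Merg_2}, applied with the finite index set $I=\{(m_1,\dots,m_{r+d})\in\mathbb{N}^{r+d}:\sum_i m_i\le l\}$ corresponding to $\mathcal{F}_l$, yields a polynomial $p(w,z)$ with $\sup_{w\in\tilde F}\sup_{z\in\prod_i S_i}|D(h-p)(w,z)|<\min\{\varepsilon,1/s\}$ for every $D\in\mathcal{F}_l$. Taking $D$ the identity and restricting to $\tilde F\times\tilde M$, where $h=g$, gives $\sup_{w\in\tilde F}\sup_{z\in\tilde M}|g-p|<\varepsilon$; restricting each $D$-estimate to $F_\tau\times T_m$, where $h$ coincides with $f_j$ on an open set (so $Dh=Df_j$ there), gives $\sup_{w\in F_\tau}\sup_{z\in T_m}|D(f_j-p)|<1/s$ for all $D\in\mathcal{F}_l$. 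It is precisely this ``coincide on an open set, not merely on $T_m$'' passage that uses the disjointness of $\tilde M_{i_0}$ and $Q_{i_0}$; the rest is bookkeeping parallel to Section~3.

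Finally I would take $f=p$. Since $p$ is a polynomial there is an $n'$, depending only on the enumeration $(N_k)$ of $\mathbb{N}^d$ and on the degree of $p$, such that $S_n(p,w,\zeta)(z)=p(w,z)$ identically in $w,\zeta,z$ for every $n\ge n'$; choosing $n\in\mu$ with $n\ge n'$ gives $\sup_{\zeta\in M_p}\sup_{w\in F_\tau}\sup_{z\in M_p}|S_n(p,w,\zeta)(z)-p(w,z)|=0<1/s$, so $p\in F(\tau,p,s,n)$, while $D(S_n(p,w,\zeta)(z)-f_j(w,z))=D(p-f_j)(w,z)$ together with the previous paragraph gives $\max_{D\in\mathcal{F}_l}\sup_{\zeta\in M_p}\sup_{w\in F_\tau}\sup_{z\in T_m}|D(S_n(p,w,\zeta)(z)-f_j(w,z))|<1/s$, so $p\in E(\tau,p,m,l,j,s,n)$. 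Combined with $\sup_{w\in\tilde F}\sup_{z\in\tilde M}|g-p|<\varepsilon$, this establishes density, and with the opening remark the lemma follows.
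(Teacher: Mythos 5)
Your proposal is correct and follows essentially the same route as the paper's proof: openness via Lemma \ref{Open_2}, then density via Lemma \ref{Simply}, the same patch function $h$ built from $g$ and $f_j$ over the disjoint sets $\tilde{M}_{i_0}$ and $Q_{i_0}$, approximation with derivatives via Lemma \ref{Merg_2}, and the choice $f = p$ with $n \geq n'$ so that $S_n(p, w, \zeta) = p$. The only difference is that you spell out why $h$ extends holomorphically to a neighbourhood of $\tilde{F} \times \prod_i S_i$ (via disjoint open neighbourhoods of $\tilde{M}_{i_0}$ and $Q_{i_0}$), a point the paper merely asserts.
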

	\begin{proof}
		By Lemma \ref{Open_2} the sets $E(\tau, p, m, l, j, s, n) \bigcap F(\tau, p, s, n), n \in \mu$ are open. Therefore the same is true for the union $\bigcup\limits_{n \in \mu}\big[E(\tau, p, m, l, j, s, n) \bigcap F(\tau, p, s, n)\big]$. We shall prove that this set is also dense. By Lemma \ref{Simply}, it suffices to show that it is dense in the set of polynomials of $r + d$ variables. Let $g$ be a polynomial. Let also $\tilde{F} = \displaystyle\prod_{i = 1}^r \tilde{F}_i, \tilde{M} = \displaystyle\prod_{i = 1}^d \tilde{M}_i$ where $\tilde{F}_i, i = 1, \dots, r$ are compact subsets of $G_i$ with connected complement and $\tilde{M}_i, i = 1, \dots, d$ are compact subsets of $\Omega_i$ with connected complement and $\varepsilon > 0$. We may also assume that $F_\tau \subseteq \tilde{F}$. It suffices to find $n \in \mu$ and $f \in E(\tau, p, m, j, s, n) \bigcap F(\tau, p, s, n)$, such that $$\sup\limits_{w \in \tilde{F}}\sup\limits_{z \in \tilde{M}}|g(w, z) - f(w, z)| < \varepsilon.$$
		The set $T_m$ is of the form $T_m = \displaystyle\prod_{i = 1}^{d} Q_i$, where one of the sets $Q_i$ satisfies $\Omega_i \bigcap Q_i = \emptyset$, which we denote by $Q_{i_0}$. Since $\tilde{M}_{i_0} \subseteq \Omega_{i_0}$, we have $\tilde{M}_{i_0} \bigcap Q_{i_0}  = \emptyset.$ For $i = 1, \dots, d$ with $i \neq i_0$ let $B_i$ be closed balls such that $\tilde{M}_i \bigcup Q_i \subseteq B_i$. We define the function $h : \tilde{F} \times \displaystyle\prod_{i = 1}^d S_i \rightarrow \mathbb{C}$, where $S_i = B_i$ for $i \neq i_0$ and $S_{i_0} = \tilde{M}_{i_0} \bigcup Q_{i_0}$, with: 
		\begin{align*}
		& h(w, z) = g(w, z) \text{ for } w \in \tilde{F} \text{ and } z \text{ in the product of } B_i \text{ for } i \neq i_0 \text{ and } \tilde{M}_{i_0}, \\
		& h(w, z) = f_j(w, z) \text{ for } w \in \tilde{F} \text{ and } z \text{ in the product of } B_i \text{ for } i \neq i_0 \text{ and } Q_{i_0}.
		\end{align*}
		We notice that the functions $g$ and $f_j$ are polynomials and thus defined everywhere. The function $h$ is holomorphic on a neighbourhood of $\tilde{F} \times \displaystyle\prod_{i = 1}^d S_i$ and the set $\tilde{F} \times \displaystyle\prod_{i = 1}^d S_i$ is a product of planar compact sets with connected complement, so, by Lemma \ref{Merg_2}, there exists a polynomial $p(w, z)$ such that
		$$\sup\limits_{w \in \tilde{F}} \sup\limits_{z \in \prod_{i = 1}^d S_i}|D \big(p(w, z) - h(w, z)\big)| < \min\big\{\varepsilon, \frac{1}{s}\big\}.$$ 
		for every $D \in \mathcal{F}_l$. Since the identity operator belongs to $\mathcal{F}_l$, we also get 
		$$\sup\limits_{w \in \tilde{F}} \sup\limits_{z \in \prod_{i = 1}^d S_i}|p(w, z) - h(w, z)| < \min\big\{\varepsilon, \frac{1}{s}\big\}.$$ 
		By the definitions of the function $h$ and the sets $S_i$ we get $\sup\limits_{w \in \tilde{F}}\sup\limits_{z \in \tilde{M}}|p(w,z) - g(w, z)| < \varepsilon$ and $\sup\limits_{w \in F_\tau}\sup\limits_{z \in T_m}|D \big(p(w, z) - f_j(w, z)\big)| < \frac{1}{s}$ for every $D \in \mathcal{F}_l$. For a fixed $\zeta_0 \in M_p$, the polynomial $p(w,z)$ can be written in the form $p(w,z) = \sum\limits_{k = 0}^{\infty}p_k(w)(z - \zeta_0)^{N_k}$ where $p_k$ are polynomials, such that all but finitely many of them are identically equal to $0$. For $i = 1, \dots, d$ we set $l_i = \max\{N_k^{(i)} : N_k = (N_k^{(1)}, \dots, N_k^{(d)}), p_k \not\equiv 0, k = 0, 1, \dots \}$. Let $n'$ be an integer such that for each $k = 0, 1, \dots$ with $N_k^{(i)} \leq l_i$ for $i = 1, \dots, d$, we have $k \leq n'$. We notice that $S_n(p, w, \zeta)(z) = p(w, z)$ for all $\zeta \in M_p$ and $n \geq n'$. Thus, by choosing $n \in \mu$ such that $n \geq n'$ we have $$\sup\limits_{\zeta \in M_p}\sup\limits_{w \in F_\tau}\sup\limits_{z \in T_m}|D \big(S_n(p, w, \zeta)(z) - f_j(w, z) \big)| < \frac{1}{s}$$
		for every $D \in \mathcal{F}_l$ and
		$$\sup\limits_{\zeta \in M_p}\sup\limits_{w \in F_\tau}\sup\limits_{z \in M_p}|S_n(p, w, \zeta)(z) - p(w, z)| = 0 < \frac{1}{s} \ .$$
		This proves that the set $\bigcup\limits_{n \in \mu}\big[E(\tau, p, m, l, j, s, n) \bigcap F(p, l, s, n)\big]$ is indeed dense.
	\end{proof}
	\begin{theorem}
		\label{Main_2}
		Under the above assumptions and notation, the set $U^\mu_H(G, \Omega)$ is a $G_\delta$ and dense subset of the space $H(G \times \Omega)$ and contains a vector space except $0$, dense in $H(G \times \Omega)$.
	\end{theorem}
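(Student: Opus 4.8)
The plan is to assemble the ingredients already established. By Lemma~\ref{Inter_2},
$$U^\mu_H(G,\Omega)=\bigcap_{\tau,p,m,l,j,s}\ \bigcup_{n\in\mu}\big[E(\tau,p,m,l,j,s,n)\cap F(\tau,p,s,n)\big],$$
which exhibits $U^\mu_H(G,\Omega)$ as a countable intersection, the index set for $(\tau,p,m,l,j,s)$ being $\mathbb{N}^6$. The lemma immediately preceding this theorem shows that for each fixed $(\tau,p,m,l,j,s)$ the set $\bigcup_{n\in\mu}\big[E(\tau,p,m,l,j,s,n)\cap F(\tau,p,s,n)\big]$ is open and dense in $H(G\times\Omega)$. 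Since $G\times\Omega$ is open in $\mathbb{C}^{r+d}$, the space $H(G\times\Omega)$ endowed with the topology of uniform convergence on compacta is a Fréchet space, hence completely metrizable, so Baire's Category Theorem applies: a countable intersection of open dense subsets of $H(G\times\Omega)$ is a dense $G_\delta$. Therefore $U^\mu_H(G,\Omega)$ is a dense $G_\delta$ subset of $H(G\times\Omega)$.

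For the algebraic part — the existence of a vector subspace $W\subseteq H(G\times\Omega)$, dense in $H(G\times\Omega)$, with $W\setminus\{0\}\subseteq U^\mu_H(G,\Omega)$ — I would argue exactly as in the proof of Theorem~\ref{Main_1}, which in turn follows the implication $(3)\Rightarrow(4)$ in the proof of Theorem~3 of \cite{Bayart}. The idea is to fix a countable dense subset of $H(G\times\Omega)$, for instance the polynomials with rational coefficients introduced earlier, and to construct inductively a linearly independent sequence $(u_k)_{k\ge 1}$ in $U^\mu_H(G,\Omega)$ such that $u_k$ is as close as prescribed to the $k$-th member of that dense set, while arranging, by a Baire-category argument performed inside each finite-dimensional subspace $\mathrm{span}\{u_1,\dots,u_k\}$, that every nonzero linear combination of $u_1,\dots,u_k$ again lies in $U^\mu_H(G,\Omega)$. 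The point making this work is that $U^\mu_H(G,\Omega)$ is a dense $G_\delta$ in each such finite-dimensional subspace with its induced (Euclidean) topology, which holds because adding to a universal function one whose Taylor partial sums already converge to it on the relevant compacta does not affect the universal approximation. Then $W=\bigcup_{k}\mathrm{span}\{u_1,\dots,u_k\}$ is the desired subspace, and the details are omitted, being verbatim those of \cite{Bayart}.

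There is in fact no genuine obstacle left: the only way the present derivative case differs from Theorem~\ref{Main_1} is that the open dense sets $E(\tau,p,m,l,j,s,n)$ now incorporate the finite families $\mathcal{F}_l$ of mixed partial derivatives, and this modification has already been absorbed in Lemma~\ref{Inter_2}, Lemma~\ref{Open_2} and the density lemma above. If anything requires attention it is the preservation of universality under the finite linear combinations in the algebraic step, but since the defining approximation property is stable under the perturbations described, this is handled exactly as in the cited argument, and the theorem follows.
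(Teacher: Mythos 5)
Your proposal is correct and follows essentially the same route as the paper: the topological part is Lemma \ref{Inter_2} plus the openness and density lemmas combined with Baire's theorem in the Fr\'echet space $H(G\times\Omega)$, and the algebraic part is deferred to the implication $(3)\Rightarrow(4)$ of Theorem 3 in \cite{Bayart}, exactly as the paper does. Your additional sketch of the lineability construction is consistent with that cited argument, so nothing further is needed.
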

	\begin{proof}
		The fact the set $U^\mu_H(G, \Omega)$ is a $G_\delta$ and dense set is obvious by combining the previous lemmas with Baire's Theorem. The proof that it also contains a dense vector space except $0$ uses the first part of Theorem \ref{Main_2}, follows the lines of the implication $(3)$ implies $(4)$ of the proof of Theorem 3 in \cite{Bayart} and is omitted.
	\end{proof}
	We now consider the case where the center of expansion of the Taylor series of $f$ does not vary but is a fixed point in $\Omega$.
	\begin{definition}
		Let $\zeta_0 \in \Omega, N_k, k = 0, 1, \dots$ be an enumeration of $\mathbb{N}^d$ and $\mu$ be an infinite subset of $\mathbb{N}$. We define $U^\mu_H(G, \Omega, \zeta_0)$ to be the set of $f \in H(G \times \Omega)$ such that for every set $K = \displaystyle\prod_{i = 1}^{d} K_i$ where $K_i \subseteq \mathbb{C}$ are compact with $\mathbb{C} \setminus K_i$ connected for all $i = 1, \dots d$ and there is at least one $i_0 \in \{1, \dots, d\}$ such that $\Omega_{i_0} \bigcap K_{i_0} = \emptyset$ and every holomorphic function $h : G \times V \rightarrow \mathbb{C}$ where $V$ is an open set containing $K$, there exists a strictly increasing sequence $\lambda_n \in \mu, n = 1, 2, \dots$ such that 
		\begin{align*}	
		& \sup_{w \in L}\sup_{z \in K}\big|D \big(S_{\lambda_n}(f, w, \zeta_0)(z) - h(w,z)\big)\big| \longrightarrow 0 \text{ and } \\
		& \sup_{w \in L}\sup_{z \in M}\big|S_{\lambda_n}(f, w, \zeta_0)(z) - f(w,z)\big| \longrightarrow 0
		\end{align*}
		for every compact subset $L$ of $G$, for every compact subset of $M$ of $\Omega$ and every differential operator $D$ of mixed partial derivatives in $(w, z)$.
	\end{definition}
	We notice that the class $U^\mu_H(G, \Omega, \zeta_0)$ can also be defined without the requirement that the sequence  $(\lambda_n)_{n \in \mathbb{N}}$ is strictly increasing and then the two definitions are equivalent; see \cite{Vlachou}.
	\begin{theorem}
		Under the above assumptions and notation, the set $U^\mu_H(G, \Omega, \zeta_0)$ is a $G_\delta$ and dense subset of the space $H(G \times \Omega)$ and contains a vector space except $0$, dense in $H(G \times \Omega)$.
	\end{theorem}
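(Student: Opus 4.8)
The plan is to mirror, almost word for word, the argument already used for the fixed-centre class $U^\mu(G,\Omega,\zeta_0)$ in Section~3, replacing Lemma~\ref{Merg_1} by Lemma~\ref{Merg_2} and carrying the finite families $\mathcal{F}_l$ of differential operators through every estimate, exactly as was done when passing from $U^\mu(G,\Omega)$ to $U^\mu_H(G,\Omega)$.

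First I would record the elementary inclusion $U^\mu_H(G,\Omega)\subseteq U^\mu_H(G,\Omega,\zeta_0)$: if the Taylor partial sums $S_{\lambda_n}(f,w,\zeta)$ realise the required approximations uniformly for all centres $\zeta$ in an arbitrary compact $M\subseteq\Omega$, then in particular they do so for the single centre $\zeta=\zeta_0$ (note $\zeta_0\in M_p$ for all large $p$, since the $M_p$ exhaust $\Omega$). By Theorem~\ref{Main_2} the set $U^\mu_H(G,\Omega)$ is dense in $H(G\times\Omega)$, hence so is $U^\mu_H(G,\Omega,\zeta_0)$; and the same inclusion together with the vector-space part of Theorem~\ref{Main_2} shows at once that $U^\mu_H(G,\Omega,\zeta_0)$ contains a vector space except $0$ which is dense in $H(G\times\Omega)$. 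Thus only the $G_\delta$ assertion requires genuine work.

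For that, keeping the notation $F_\tau,M_p,T_m$, the polynomials $f_j$ and the sets $\mathcal{F}_l$, I would introduce, for $\tau,p,m,l,j,s\geq 1$ and $n\geq 0$,
$$E(\tau,m,l,j,s,n):=\Big\{f\in H(G\times\Omega):\max_{D\in\mathcal{F}_l}\sup_{w\in F_\tau}\sup_{z\in T_m}\big|D\big(S_n(f,w,\zeta_0)(z)-f_j(w,z)\big)\big|<\frac1s\Big\}$$
and
$$F(\tau,p,s,n):=\Big\{f\in H(G\times\Omega):\sup_{w\in F_\tau}\sup_{z\in M_p}\big|S_n(f,w,\zeta_0)(z)-f(w,z)\big|<\frac1s\Big\},$$
i.e. the sets of Lemma~\ref{Inter_2} with the now-superfluous supremum over $\zeta\in M_p$ deleted, so that $E$ loses its index $p$ while $F$ keeps it. Repeating the proof of Lemma~\ref{Inter_2} verbatim --- in one direction using the defining property of $U^\mu_H(G,\Omega,\zeta_0)$ with $h=f_j$, in the other applying Lemma~\ref{Merg_2} to the holomorphic $h$ on a neighbourhood of $K$, choosing $m_0$ with $K\subseteq T_{m_0}$, passing from $T_{m_0}$ down to $K$, and letting the monotone families $F_\tau,M_p,\mathcal{F}_l$ absorb arbitrary compacta $L,M$ and arbitrary operators $D$ --- yields
$$U^\mu_H(G,\Omega,\zeta_0)=\bigcap_{\tau,p,m,l,j,s}\ \bigcup_{n\in\mu}\big[E(\tau,m,l,j,s,n)\cap F(\tau,p,s,n)\big].$$

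It then remains to check, exactly as in Lemma~\ref{Open_2} (equivalently Lemma~\ref{Open_1}), that each $E(\tau,m,l,j,s,n)$ and each $F(\tau,p,s,n)$ is open in $H(G\times\Omega)$: if $g_i\to g$ uniformly on compacta then, by the Weierstrass theorem, $a_k(g_i,\cdot,\zeta_0)\to a_k(g,\cdot,\zeta_0)$ together with all their $w$-derivatives, uniformly on $F_\tau$, for each $0\leq k\leq n$; since $T_m$ is bounded and $\mathcal{F}_l$ is finite, this forces $D\,S_n(g_i,\cdot,\zeta_0)\to D\,S_n(g,\cdot,\zeta_0)$ uniformly on $F_\tau\times T_m$ for every $D\in\mathcal{F}_l$, so the complements of $E$ and $F$ are closed. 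Combining the displayed intersection with Baire's Theorem then finishes the proof. I do not expect a real obstacle here; the only thing to watch is the bookkeeping of indices --- decoupling the differentiation order $l$ from the absent centre index and keeping $p$ only on the ``$f$-approximation'' set $F$ --- which is precisely the modification already carried out in Section~4 relative to Section~3.
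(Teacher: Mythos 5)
Your proposal is correct and follows essentially the same route as the paper: the inclusion $U^\mu_H(G,\Omega)\subseteq U^\mu_H(G,\Omega,\zeta_0)$ gives density and the dense vector space via Theorem \ref{Main_2}, and the $G_\delta$ property comes from the same representation as a countable intersection of the open sets $E(\tau,m,l,j,s,n)\cap F(\tau,p,s,n)$, proved by repeating Lemmas \ref{Inter_2} and \ref{Open_2} with the fixed centre $\zeta_0$. (The only cosmetic point: Baire's theorem is not actually needed at the end, since density already follows from the inclusion.)
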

	\begin{proof}
		Clearly, we have $U^\mu_H(G, \Omega) \subseteq U^\mu_H(G, \Omega, \zeta_0)$, so $U^\mu_H(G, \Omega, \zeta_0)$ contains a $G_\delta$ and dense set and thus is dense. If we prove that it can be written as a countable intersection of open in $H(G \times \Omega)$ then it would be a $G_\delta$ and dense set itself.
		
		Let $F_\tau, M_p, T_m, \mathcal{F}_l$ and $f_j$ be as previously. For any $\tau, p, m, l, j, s, n$ with $\tau, p, m, l, j, s \geq 1, n \geq 0$, we denote by $E(\tau, m, l, j, s, n)$ and $F(\tau, p, s, n)$ the sets $$E(\tau, m, l, j, s, n) := \Big\{f \in H(G \times \Omega) : \max_{D \in \mathcal{F}_l}\sup_{w \in F_\tau}\sup_{z \in T_m}\big|D \big(S_{\lambda_n}(f, w, \zeta_0)(z) - f_j(w,z)\big)\big| < \frac{1}{s}\Big\},$$ \\
		$$F(\tau, p, s, n) := \Big\{f \in H(G \times \Omega) : \sup_{w \in F_\tau}\sup_{z \in M_p}\big|S_n(f, w, \zeta_0)(z) - f(w,z)\big| < \frac{1}{s}\Big\}.$$
		Following the lines of implication of the proofs of Lemma \ref{Inter_2} and Lemma \ref{Open_2} we can show that
		$$U^\mu_H(G, \Omega, \zeta_0)=\bigcap_{\tau, p, m, l, j, s} \ \ \bigcup_{n\in \mu} \big[E(\tau, m, l, j, s, n) \bigcap F(\tau, p, s, n)\big].$$
		and that the sets $E(\tau, m, l, j, s, n)$ and $F(\tau, p, s, n)$ are open in the space $H(G \times \Omega)$ for all $\tau \geq 1, p \geq 1, m \geq 1, l, j \geq 1, s \geq 1$ and $n \in \mu$. Thus, $U^\mu_H(G, \Omega, \zeta_0)$ is a $G_\delta$ and dense set. Also, by Theorem \ref{Main_2} and the fact that $U^\mu_H(G, \Omega) \subseteq U^\mu_H(G, \Omega, \zeta_0)$, $U^\mu_H(G, \Omega, \zeta_0)$ contains a vector space except $0$, dense in $H(G \times \Omega)$.
	\end{proof}
	\begin{remark}
		For Universal Taylor series with parameters with respect to many enumerations, we refer to Remark \ref{Many} and Section 6 of \cite{Kioulafa}.
	\end{remark}
	\section{Universal Taylor series with parameters in \break $A^\infty(G \times \Omega)$}
		Let $G_i \subseteq \mathbb{C}, i = 1, \dots, r$ and $\Omega_i \subseteq \mathbb{C}, i = 1, \dots, d$ be simply connected domains. We set $G = \displaystyle\prod_{i = 1}^{r} G_i$ and $\Omega = \displaystyle\prod_{i = 1}^{d} \Omega_i$. We denote by $A^\infty(G \times \Omega)$ the set of all holomorphic function on $G \times \Omega$ such that the function $Df$ extends continuously to $\overline{G \times \Omega}$ for all differential operators $D$ of mixed partial derivatives in $(w, z) = (w_1, \dots, w_r, z_1, \dots, z_d)$. The topology of $A^\infty(G \times \Omega)$ is defined by the countable family of seminorms $\sup\limits_{w \in \overline{G}, |w| \leq n}\sup\limits_{z \in \overline{\Omega}, |z| \leq n}\big|Df(w, z)\big|$ where $n = 1, 2, \dots$ and $D$ varies in the set of all differential operators of mixed partial derivatives in $(w, z) = (w_1, \dots, w_r, z_1, \dots, z_d)$. We notice that for $w \in \partial G$ and $z \in \partial \Omega$, $Df(w, z)$ is well defined since $Df$ extends continuously to $\overline{G \times \Omega}$. The space $A^\infty(G \times \Omega)$ endowed with this topology becomes a complete metrizable space and so Baire's Theorem is at our disposal. Let also $X^\infty(G \times \Omega)$ be the closure of the set of all polynomials in $A^\infty(G \times \Omega)$.
	\begin{definition}
		Let $N_k, k = 0, 1, \dots$ be an enumeration of $\mathbb{N}^d, \mu$ be an infinite subset of $\mathbb{N}$ and $G$ and $\Omega$ be as above. We also assume that the sets $\{\infty\} \bigcup [\mathbb{C} \setminus \overline{G}_i], i = 1, \dots, r$ and $\{\infty\} \bigcup [\mathbb{C} \setminus \overline{\Omega}_i], i = 1, \dots, d$ are connected. We define $U^\mu_\infty(G, \Omega)$ to be the set of $f \in X^\infty(G \times \Omega)$ such that for every set $K = \displaystyle\prod_{i = 1}^{d} K_i$ where $K_i \subseteq \mathbb{C}$ are compact with $\mathbb{C} \setminus K_i$ connected for all $i = 1, \dots d$ and there is at least one $i_0 \in \{1, \dots, d\}$ such that $\overline{\Omega}_{i_0} \bigcap K_{i_0} = \emptyset$ and every continuous function $h : G \times K \rightarrow \mathbb{C}$ such that for every fixed $w \in G$, the function $K \ni z \rightarrow h(w, z) \in \mathbb{C}$ belongs to $A_D(K)$ and for every fixed $z \in K$, the function $G \ni w \rightarrow h(w, z) \in \mathbb{C}$ belongs to $H(G)$, there exists a strictly increasing sequence $\lambda_n \in \mu, n = 1, 2, \dots$ such that 
		\begin{align*}
		& \sup_{\zeta \in M}\sup_{w \in L}\sup_{z \in K}\big|S_{\lambda_n}(f, w, \zeta)(z) - h(w,z)\big| \longrightarrow 0 \text{ and } \\
		& \sup_{\zeta \in M}\sup_{w \in \overline{G}, |w| \leq l}\sup_{z \in \overline{\Omega}, |z| \leq l}\big|D \big(S_{\lambda_n}(f, w, \zeta)(z) - f(w,z)\big)\big| \longrightarrow 0
		\end{align*}
		for every compact subset $L$ of $G$, every compact subset $M$ of $\overline{\Omega}$, every differential operator $D$ of mixed partial derivatives in $(w, z)$ and every positive integer $l$.
	\end{definition}
	We notice that the class $U^\mu_\infty(G, \Omega)$ can also be defined without the requirement that the sequence  $(\lambda_n)_{n \in \mathbb{N}}$ is strictly increasing and then the two definitions are equivalent; see \cite{Vlachou}.
	
	Let $M_p = \{z \in \overline{\Omega} : |z| \leq  p \}, p = 1, 2, \dots$. It is known that, for each $i = 1, \dots, d$, there exist compact sets $R_{i, j} \subseteq \mathbb{C} \setminus \overline{\Omega}_i, j = 1, 2, \dots$ with connected complement, such that for every compact set $T \subseteq \mathbb{C} \setminus \overline{\Omega}_i$ with connected complement, there exists an integer $j$ such that $T \subseteq R_{i, j}$. Let $T_m$ be an enumeration of all $\displaystyle\prod_{i = 1}^{d} Q_i$, such that there exists an integer $i_0 = 1, 2, \dots, d$ with $Q_{i_0} \in \{R_{i_0, j} : j = 1, 2, \dots,\}$ and the rest of the sets $Q_i$ are closed balls centered at 0 whose radius is a positive integer.
	
	We keep the notation of the sets $F_\tau$ and $\mathcal{F}_l$ and the polynomials $f_j$ from the previous section. For any $\tau, p, m, l, j, s, n$ with $\tau, p, m, l, j, s \geq 1, n \geq 0$, we denote by $E(\tau, p, m, j, s, n)$ the set $$\Big\{f \in X^\infty(G \times \Omega) : \sup_{\zeta \in M_p}\sup_{w \in F_\tau}\sup_{z \in T_m}\big|S_n(f, w, \zeta)(z) - f_j(w,z)\big| < \frac{1}{s}\Big\},$$ and by $F(p, l, s, n)$ the set
	$$\Big\{f \in X^\infty(G \times \Omega) : \max_{D \in \mathcal{F}_l}\sup_{\zeta \in M_p}\sup_{\substack{w \in \overline{G} \\ |w| \leq l}}\sup_{\substack{z \in \overline{\Omega} \\ |z| \leq l}}\big|D \big(S_n(f, w, \zeta)(z) - f(w,z)\big)\big| < \frac{1}{s}\Big\}.$$
	\begin{lemma}
		\label{Inter_3}
		With the above assumptions,
		$$U^\mu_\infty(G, \Omega)=\bigcap_{\tau, p, m, l, j, s} \ \ \bigcup_{n\in \mu} \big[E(\tau, p, m, j, s, n) \bigcap F(p, l, s, n)\big].$$
	\end{lemma}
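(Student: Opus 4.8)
The plan is to establish the two inclusions separately, following line by line the proofs of Lemma \ref{Inter_1} and Lemma \ref{Inter_2}; the only structural differences are that admissible compacta $K$ must now satisfy $\overline{\Omega}_{i_0}\cap K_{i_0}=\emptyset$ rather than $\Omega_{i_0}\cap K_{i_0}=\emptyset$, and that convergence of the partial sums to $f$ is now in the $A^\infty$-sense, i.e. uniformly together with every mixed derivative of order $\le l$ on $\{w\in\overline G:|w|\le l\}\times\{z\in\overline\Omega:|z|\le l\}$ — which is why $F(p,l,s,n)$ carries the extra index $l$ and the finite set $\mathcal F_l$, and why here $M_p=\{z\in\overline\Omega:|z|\le p\}$.

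For the inclusion $U^\mu_\infty(G,\Omega)\subseteq\bigcap_{\tau,p,m,l,j,s}\bigcup_{n\in\mu}[E(\tau,p,m,j,s,n)\cap F(p,l,s,n)]$ I would fix $f\in U^\mu_\infty(G,\Omega)$ and arbitrary positive integers $\tau,p,m,l,j,s$, and observe that $T_m$ is an admissible $K$ (it is a product of planar compacts with connected complements, one factor $Q_{i_0}$ being disjoint from $\overline\Omega_{i_0}$) and that $h:=f_j|_{G\times T_m}$ is an admissible test function, since $f_j$ is a polynomial: its $z$-slices are holomorphic on $\mathbb C^d$ hence lie in $A_D(T_m)$, and its $w$-slices lie in $H(G)$. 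Feeding this $K=T_m$ and $h=f_j$ into the definition of $U^\mu_\infty(G,\Omega)$, with $L=F_\tau$, $M=M_p$, the integer $l$, and each of the finitely many $D\in\mathcal F_l$, produces a strictly increasing $\lambda_n\in\mu$ with $\sup_{\zeta\in M_p}\sup_{w\in F_\tau}\sup_{z\in T_m}|S_{\lambda_n}(f,w,\zeta)(z)-f_j(w,z)|\to0$ and $\max_{D\in\mathcal F_l}\sup_{\zeta\in M_p}\sup_{w\in\overline G,\,|w|\le l}\sup_{z\in\overline\Omega,\,|z|\le l}|D(S_{\lambda_n}(f,w,\zeta)(z)-f(w,z))|\to0$; taking $n'=\lambda_{n_0}$ with $n_0$ large makes both quantities $<1/s$, so $f\in E(\tau,p,m,j,s,n')\cap F(p,l,s,n')$.

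For the reverse inclusion I would take $f$ in the right-hand side — so automatically $f\in X^\infty(G\times\Omega)$ — fix an admissible $K$ and test function $h$, and for each $\tau$ use the slice-function characterisation of $A_D$ on products recalled before Lemma \ref{Inter_1} to see that $h|_{F_\tau\times K}\in A_D(F_\tau\times K)$; since $F_\tau\times K$ is a product of planar compacts with connected complements, Lemma \ref{Merg_1} gives a polynomial $f_{j_\tau}$ with $\sup_{w\in F_\tau}\sup_{z\in K}|h(w,z)-f_{j_\tau}(w,z)|<1/(2\tau)$, which I rewrite with a harmless $\sup_{\zeta\in M_\tau}$ in front. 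Next I would note $K\subseteq T_{m_0}$ for some $m_0$ (the factor $K_{i_0}\subseteq\mathbb C\setminus\overline\Omega_{i_0}$ with connected complement lies in some $R_{i_0,j}$, the others in closed balls of integer radius), use membership in $\bigcup_{n\in\mu}[E(\tau,\tau,m_0,j_\tau,2\tau,n)\cap F(\tau,\tau,2\tau,n)]$ to extract $\lambda_\tau\in\mu$ with $\sup_{\zeta\in M_\tau}\sup_{w\in F_\tau}\sup_{z\in T_{m_0}}|S_{\lambda_\tau}(f,w,\zeta)(z)-f_{j_\tau}(w,z)|<1/(2\tau)$ and $\max_{D\in\mathcal F_\tau}\sup_{\zeta\in M_\tau}\sup_{w\in\overline G,\,|w|\le\tau}\sup_{z\in\overline\Omega,\,|z|\le\tau}|D(S_{\lambda_\tau}(f,w,\zeta)(z)-f(w,z))|<1/(2\tau)$, restrict the first to $K\subseteq T_{m_0}$ and combine with the Mergelyan estimate to get $\sup_{\zeta\in M_\tau}\sup_{w\in F_\tau}\sup_{z\in K}|S_{\lambda_\tau}(f,w,\zeta)(z)-h(w,z)|<1/\tau$. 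Finally, given a compact $L\subseteq G$, a compact $M\subseteq\overline\Omega$, a mixed derivative $D$ and an integer $l$, choosing $\tau_0$ with $L\subseteq F_{\tau_0}$, $M\subseteq M_{\tau_0}$, $D\in\mathcal F_{\tau_0}$, $l\le\tau_0$ and invoking monotonicity of $\{F_\tau\},\{M_p\},\{\mathcal F_l\}$ shows that for all $\tau\ge\tau_0$ the two displayed estimates dominate the two quantities in the definition of $U^\mu_\infty(G,\Omega)$ by $1/\tau$; hence both tend to $0$ along $(\lambda_\tau)$ and $f\in U^\mu_\infty(G,\Omega)$ (replacing $(\lambda_\tau)$ by a strictly increasing subsequence if desired, cf. \cite{Vlachou}).

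I expect no conceptual obstacle — the argument is a transcription of Lemma \ref{Inter_1} — but the delicate points will be the index bookkeeping, namely that $F$ depends on $(p,l,s,n)$ and not on $\tau,m,j$, and that the single parameter $l$ simultaneously controls the order of the admissible derivatives and the radius of $\{|w|,|z|\le l\}$, while $M_p$ plays the double role of a generic compact subset of $\overline\Omega$ in the first convergence and of $\{z\in\overline\Omega:|z|\le p\}$ in the second. The two genuinely non-formal inputs are exactly the ones used above: the slice characterisation of $A_D$ on products (so Lemma \ref{Merg_1} applies to $h|_{F_\tau\times K}$) and the fact that every admissible $K$ sits inside some $T_{m_0}$, which relies on the exhaustion of $\mathbb C\setminus\overline\Omega_{i_0}$ by the compacts $R_{i_0,j}$ with connected complement.
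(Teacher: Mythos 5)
Your proposal is correct and follows essentially the same route as the paper's own proof: the same choice of $T_m$ and $f_j$ as admissible $K$ and $h$ for the forward inclusion, and for the reverse inclusion the same use of the slice characterisation of $A_D$, Lemma \ref{Merg_1}, the inclusion $K\subseteq T_{m_0}$, and extraction of $\lambda_\tau$ from $E(\tau,\tau,m_0,j_\tau,2\tau,n)\cap F(\tau,\tau,2\tau,n)$ followed by the monotonicity argument. The bookkeeping points you flag (the index $l$ doing double duty, $M_p$ being both a generic compact of $\overline\Omega$ and the truncation $\{z\in\overline\Omega:|z|\le p\}$) are handled exactly as in the paper.
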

	\begin{proof}
		For the inclusion $U^\mu_\infty(G, \Omega) \subseteq \bigcap\limits_{\tau, p, m, l, j, s} \ \ \bigcup\limits_{n\in \mu} \big[E(\tau, p, m, j, s, n) \bigcap F(p, l, s, n)\big]$, we consider a function $f\in U^\mu_\infty(G, \Omega)$ and arbitrary positive integers $\tau, p, m, l, j, s$.
		We shall show that there exists an integer $n' \in \mu$ such that $f \in E(\tau, p, m, j, s, n') \bigcap F(p, l, s, n')$. Since $f_j$ is a polynomial, it is holomorphic on $\mathbb{C}^r \times \mathbb{C}^d.$ Thus, using the definition of the class $U^\mu_\infty(G, \Omega)$, we fix a sequence $(\lambda_n)_{n \in \mathbb{N}},\lambda_n\in \mu$, such that 
		$$\sup_{\zeta \in M_p}\sup_{w \in F_\tau}\sup_{z \in T_m}\big|S_{\lambda_n}(f, w, \zeta)(z) - f_j(w,z)\big| \to 0 \ \text{as} \ n \to \infty$$ 
		and 
		$$\sup_{\zeta \in M_p}\sup_{w \in \overline{G}, |w| \leq l}\sup_{z \in \overline{\Omega}, |z| \leq l}\big|D \big(S_{\lambda_n}(f, w, \zeta)(z) - f(w,z)\big)\big| \to 0 \ \text{as} \ n \to \infty \ .$$ 
		for every differential operator $D$ of mixed partial derivatives in $(w, z)$ and so
		$$\max_{D \in \mathcal{F}_l}\sup_{\zeta \in M_p}\sup_{w \in \overline{G}, |w| \leq l}\sup_{z \in \overline{\Omega}, |z| \leq l}\big|D \big(S_{\lambda_n}(f, w, \zeta)(z) - f(w,z)\big)\big| \to 0 \ \text{as} \ n \to \infty \ .$$ 
		Thus, there exist positive integers $n_1, n_2$, such that 
		$$\sup_{\zeta \in M_p}\sup_{w \in F_\tau}\sup_{z \in T_m}\big|S_{\lambda_n}(f, w, \zeta)(z) - f_j(w,z)\big|<\frac{1}{s}$$ for every $n \geq n_1$ and $$\max_{D \in \mathcal{F}_l}\sup_{\zeta \in M_p}\sup_{w \in \overline{G}, |w| \leq l}\sup_{z \in \overline{\Omega}, |z| \leq l}\big|D \big(S_{\lambda_n}(f, w, \zeta)(z) - f(w,z)\big)\big| < \frac{1}{s}$$ for every $n \geq n_2$. By setting $n_0 = \max\{n_1, n_2\}$ and $n' = \lambda_{n_0}$ we get $f\in $ \break $E(\tau, p, m, j, s, n') \bigcap F(p, l, s, n')$ which is exactly what we wanted to show.
		
		For the other inclusion, we consider a function 
		$$f\in \bigcap\limits_{\tau, p, m, j, s} \ \ \bigcup\limits_{n\in \mu} \big[E(\tau, p, m, j, s, n) \bigcap F(p, l, s, n)\big] \ .$$
		Let $K = \displaystyle\prod_{i = 1}^{d} K_i$ where $K_i \subseteq \mathbb{C}$ are compact with $\mathbb{C} \setminus K_i$ connected for all $i = 1, \dots d$ and there is at least one $i_0 \in \{1, \dots, d\}$ such that $\overline{\Omega}_{i_0} \bigcap K_{i_0} = \emptyset$. Let also $h : G \times K \rightarrow \mathbb{C}$ be a continuous function such that for every fixed $w \in G$, the function $K \ni z \rightarrow h(w, z) \in \mathbb{C}$ belongs to $A_D(K)$ and for every fixed $z \in K$, the function $G \ni w \rightarrow h(w, z) \in \mathbb{C}$ belongs to $H(G)$. 
		Then, for each positive integer $\tau$ we have $h\in A_D(F_{\tau} \times K)$; thus, by Lemma \ref{Merg_1}, there exists a positive integer $j_\tau$, such that 
		$$\sup_{w\in F_{\tau}}\sup_{z\in K}\left|h(w, z)-f_{j_\tau}(w, z)\right|<\frac{1}{2\tau} \ .$$
		Since the above relation does not depend on $\zeta$, it can be rewritten as
		$$\sup_{\zeta \in M_\tau}\sup_{w\in F_{\tau}}\sup_{z\in K}\left|h(w, z)-f_{j_\tau}(w, z)\right|<\frac{1}{2\tau} \ . \ (1)$$
		By the definition of the sets $T_m$, there exists a positive integer $m_0$, such that $K \subseteq T_{m_0}$. For each positive integer $\tau$ we have $f \in \bigcup\limits_{n\in \mu} \big[E(\tau, \tau, m_0, j_\tau, 2 \tau, n) \bigcap F(\tau, \tau, 2 \tau, n)\big]$, thus, there exists a positive integer $\lambda_{\tau}\in \mu$, such that 
		$$\sup_{\zeta\in M_{\tau}}\sup_{w\in F_{\tau}}\sup_{z\in T_{m_0}} \left|S_{\lambda_{\tau}}(f,w,\zeta)(z)-f_{j_\tau}(w,z)\right|<\frac{1}{2\tau}$$ 
		and $$\max_{D \in \mathcal{F}_\tau}\sup_{\zeta \in M_\tau}\sup_{w \in \overline{G}, |w| \leq \tau}\sup_{z \in \overline{\Omega}, |z| \leq \tau}\big|D \big(S_{\lambda_\tau}(f, w, \zeta)(z) - f(w,z)\big)\big|<\frac{1}{2\tau} \ . \ (2)$$
		Since $K \subset T_{m_0}$, it follows that 
		$$\sup_{\zeta\in M_{\tau}}\sup_{w\in F_{\tau}}\sup_{z\in K} \left|S_{\lambda_{\tau}}(f,w,\zeta)(z)-f_{j_\tau}(w,z)\right|<\frac{1}{2\tau} \ . \ (3)$$
		From $(1)$ and $(3)$ it follows that for each positive integer $\tau$, we have
		$$\sup_{\zeta\in M_{\tau}}\sup_{w\in F_{\tau}}\sup_{z\in K} \left|S_{\lambda_{\tau}}(f,w,\zeta)(z)-h(w,z)\right|<\frac{1}{\tau} \ . \ (4)$$
		
		We consider the sequence $(\lambda_\tau)_{\tau \in \mathbb{N}}$.
		Let $L \subseteq G$, $M \subseteq \overline{\Omega}$ be compact sets, $D$ be a differential operator of mixed partial derivatives in $(w, z)$ and $l$ a positive integer.
		Then, there exists a positive integer $\tau_0 \geq l$, such that $L \subseteq F_{\tau_0}$, $M \subseteq M_{\tau_0}$ and $D \in \mathcal{F}_{\tau_0}$.
		Since the families $\{F_{\tau}:\tau=1,2,\ldots\}$, $\{M_p:p=1,2,\ldots\}$ and $\{\mathcal{F}_l:l=1,2,\ldots\}$ are increasing, it follows that $L\subseteq F_{\tau}$, $M\subseteq M_{\tau}$ and $D \in \mathcal{F}_{\tau}$ for any positive integer $\tau\geq \tau_0$, therefore, from $(4)$, it follows that 
		$$\sup_{\zeta\in M}\sup_{w\in L}\sup_{z\in K} \left|S_{\lambda_{\tau}}(f,w,\zeta)(z)-h(w,z)\right|<\frac{1}{\tau} \ \text{for all} \ \tau\geq \tau_0$$
		whence it follows that 
		$$\sup_{\zeta\in M}\sup_{w\in L}\sup_{z\in K} \left|S_{\lambda_{\tau}}(f,w,\zeta)(z)-h(w,z)\right|\to 0 \ \text{as} \ \tau\to \infty \ .$$
		From $(2)$, we get $$\sup_{\zeta \in M}\sup_{w \in \overline{G}, |w| \leq l}\sup_{z \in \overline{\Omega}, |z| \leq l}\big|D \big(S_{\lambda_\tau}(f, w, \zeta)(z) - f(w,z)\big)\big|<\frac{1}{2\tau} < \frac{1}{\tau} \ \text{for all} \ \tau\geq \tau_0$$
		and so $$\sup_{\zeta \in M}\sup_{w \in \overline{G}, |w| \leq l}\sup_{z \in \overline{\Omega}, |z| \leq l}\big|D \big(S_{\lambda_\tau}(f, w, \zeta)(z) - f(w,z)\big)\big| \to 0 \ \text{as} \ \tau\to \infty \ .$$
		Hence, $f\in U^\mu_\infty(G, \Omega)$ and the proof is completed.
	\end{proof}
	The space $X^\infty(G \times \Omega)$ is a closed subset of $A^\infty(G \times \Omega)$ so it is a complete metrizable space. Thus, Baire's Theorem is at our disposal. So, if we prove that the sets $\bigcup_{n\in \mu} \big[E(\tau, p, m, j, s, n) \bigcap F(p, l, s, n)\big]$ are open and dense in $X^\infty(G \times \Omega)$ for all positive integers $\tau, p, m, l, j$ and $s$, then the set $U^\mu_\infty(G, \Omega)$ is a dense $G_\delta$ set.
	\begin{lemma}
		\label{Open_3}
		Let $\tau \geq 1, p \geq 1, m \geq 1, l \geq 1, j \geq 1, s \geq 1$ and $n \in \mu$. Then:
		\begin{itemize}
			\item [(i)] The set $E(\tau, p, m, j, s, n)$ is open in the space $X^\infty(G \times \Omega)$.
			\item [(ii)] The set $F(p, l, s, n)$ is open in the space $X^\infty(G \times \Omega)$.
		\end{itemize}
	\end{lemma}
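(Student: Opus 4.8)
The plan is to imitate the proofs of Lemma \ref{Open_1} and Lemma \ref{Open_2}: since $X^\infty(G \times \Omega)$ is metrizable, it suffices to show that the complements $X^\infty(G \times \Omega) \setminus E(\tau, p, m, j, s, n)$ and $X^\infty(G \times \Omega) \setminus F(p, l, s, n)$ are sequentially closed. The structural point that replaces the appeal to the Weierstrass theorem made in the earlier lemmas is that convergence in the topology of $X^\infty(G \times \Omega)$, i.e.\ in all the seminorms $\sup_{w \in \overline{G}, |w| \le n} \sup_{z \in \overline{\Omega}, |z| \le n} |D f(w,z)|$, already encodes uniform convergence of every mixed partial derivative in $(w,z)$ on every bounded part of $\overline{G \times \Omega}$. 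Since $F_\tau$, $M_p = \{z \in \overline{\Omega} : |z| \le p\}$, $T_m$, and the sets $\{w \in \overline{G} : |w| \le l\}$, $\{z \in \overline{\Omega} : |z| \le l\}$ are all bounded, every quantity occurring in the definitions of $E$ and $F$ is evaluated on a fixed bounded set, so the whole argument will reduce to passing limits through finite sums and finitely many derivatives.

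For (i) I would take a sequence $g_i \to g$ in $X^\infty(G \times \Omega)$ with $g_i \notin E(\tau, p, m, j, s, n)$ for all $i$. For each $k \le n$ the coefficient $a_k(f, w, \zeta)$ is, up to a constant factor, a fixed mixed $z$-derivative of $f$ evaluated at $(w,\zeta)$, so convergence in $X^\infty(G \times \Omega)$ gives $\sup_{\zeta \in M_p} \sup_{w \in F_\tau} |a_k(g_i, w, \zeta) - a_k(g, w, \zeta)| \to 0$. Multiplying by the monomials $(z-\zeta)^{N_k}$, which are uniformly bounded on $T_m \times M_p$, and summing over $k = 0, \dots, n$, one gets $\sup_{\zeta \in M_p} \sup_{w \in F_\tau} \sup_{z \in T_m} |S_n(g_i, w, \zeta)(z) - S_n(g, w, \zeta)(z)| \to 0$. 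Hence the functional $f \mapsto \sup_{\zeta \in M_p} \sup_{w \in F_\tau} \sup_{z \in T_m} |S_n(f, w, \zeta)(z) - f_j(w, z)|$ is continuous along the sequence; being $\ge 1/s$ for every $g_i$, it is $\ge 1/s$ for $g$, so $g \notin E(\tau, p, m, j, s, n)$.

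For (ii) I would again take $g_i \to g$ in $X^\infty(G \times \Omega)$ with $g_i \notin F(p, l, s, n)$, and fix $D \in \mathcal{F}_l$, writing $D = D_w \circ D_z$ for its $w$- and $z$-parts. Applying $D$ to $S_n(f, w, \zeta)(z) = \sum_{k=0}^{n} a_k(f, w, \zeta)(z-\zeta)^{N_k}$ with $\zeta$ held fixed, $D_z$ acts only on the monomials $(z-\zeta)^{N_k}$ (giving polynomials in $(z,\zeta)$), while $D_w$ acts only on $a_k(f, w, \zeta)$, turning it into another fixed mixed $(w,z)$-derivative of $f$ evaluated at $(w,\zeta)$. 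On the bounded set $\{w \in \overline{G}, |w| \le l\} \times \{z \in \overline{\Omega}, |z| \le l\} \times M_p$, convergence in $X^\infty(G \times \Omega)$ forces each of these finitely many derivatives of $g_i$ to converge uniformly to the corresponding derivative of $g$, and likewise $D g_i \to D g$ uniformly there; hence $D(S_n(g_i, w, \zeta)(z)) \to D(S_n(g, w, \zeta)(z))$ uniformly over $\zeta \in M_p$, $w \in \overline{G}$ with $|w| \le l$, and $z \in \overline{\Omega}$ with $|z| \le l$. Taking the maximum over the finite set $\mathcal{F}_l$, the functional defining $F(p, l, s, n)$ is continuous along the sequence and stays $\ge 1/s$ in the limit, so $g \notin F(p, l, s, n)$.

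The step I expect to need the most care is the bookkeeping when $D$ is applied to $S_n(f, w, \zeta)(z)$: one must keep in mind that the $z$-derivatives contributed by the coefficients $a_k$ are derivatives of $f$ at the fixed center $\zeta$, not of the running variable, so that after differentiation only finitely many genuine mixed $(w,z)$-derivatives of $f$ survive, each converging uniformly on a fixed bounded set directly by the definition of the topology of $X^\infty(G \times \Omega)$. Apart from this, the argument is essentially word for word the one used for Lemma \ref{Open_1}, and I would simply refer back to it.
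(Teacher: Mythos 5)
Your argument is correct and follows essentially the same route as the paper: you show the complements are sequentially closed by observing that convergence in the topology of $A^\infty(G \times \Omega)$ gives uniform convergence of all mixed partial derivatives on bounded parts of $\overline{G \times \Omega}$, which makes the sup-functionals defining $E$ and $F$ continuous along the sequence (the paper states this more tersely, deferring the coefficient bookkeeping for part (i) to its Lemma \ref{Open_1}, while you spell it out). No gaps.
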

	\begin{proof}
		(i) We shall show that $X^\infty(G \times \Omega) \setminus E(\tau, p, m, j, s, n)$ is closed in $H(G \times \Omega)$. Let $g_i \in X^\infty(G \times \Omega) \setminus E(\tau, p, m, j, s, n), i = 1, 2, \dots$ be a sequence that converges to a function $g \in X^\infty(G \times \Omega)$ in the topology of $A^\infty(G \times \Omega)$. We shall show that $g \in X^\infty(G \times \Omega) \setminus E(\tau, p, m, j, s, n)$. Since $g_i \to g$ in the topology of $A^\infty(G \times \Omega)$, we notice that $D g_i \to D g$ uniformly on all compact subsets of $\overline{G \times \Omega}$ for all differential operators $D$ of mixed partial derivatives in $(w, z)$ and the result follows as in Lemma \ref{Open_1} (i).\\
		(ii) We shall show that $X^\infty(G \times \Omega) \setminus F(p, l, s, n)$ is closed in $X^\infty(G \times \Omega)$. Let $g_i \in X^\infty(G \times \Omega) \setminus F(p, l, s, n), i = 1, 2, \dots$ be a sequence that converges to a function $g \in X^\infty(G \times \Omega)$ in the topology of $A^\infty(G \times \Omega)$. We shall show that $g \in X^\infty(G \times \Omega) \setminus F(p, l, s, n)$ and so it suffices to show that $$\max_{D \in \mathcal{F}_l}\sup_{\zeta \in M_p}\sup_{\substack{w \in \overline{G} \\ |w| \leq l}}\sup_{\substack{z \in \overline{\Omega} \\ |z| \leq l}}\big|D \big(S_n(g, w, \zeta)(z) - g(w,z)\big)\big| \geq \frac{1}{s}.$$ 
		Let $D \in \mathcal{F}_l$, then, since $g_i \to g$ in the topology of $A^\infty(G \times \Omega)$, we have $D g_i \to D g$ uniformly on all compact subsets of $\overline{G \times \Omega}$ as $i \to \infty$. So, we get $$\sup_{\zeta \in M_p}\sup_{\substack{w \in \overline{G} \\ |w| \leq l}}\sup_{\substack{z \in \overline{\Omega} \\ |z| \leq l}} \big|D \big(S_n(g_i, w, \zeta)(z) - S_n(g, w, \zeta)(z)\big)\big| \to 0$$ as $i \to \infty$ and so $$\sup_{\zeta \in M_p}\sup_{\substack{w \in \overline{G} \\ |w| \leq l}}\sup_{\substack{z \in \overline{\Omega} \\ |z| \leq l}}\big|D \big(S_n(g_i, w, \zeta)(z) - g_i(w,z)\big)\big|\to \sup_{\zeta \in M_p}\sup_{\substack{w \in \overline{G} \\ |w| \leq l}}\sup_{\substack{z \in \overline{\Omega} \\ |z| \leq l}}\big|D \big(S_n(g, w, \zeta)(z) - g(w,z)\big)\big|$$ as $i \to \infty$. Since $\max\limits_{D \in \mathcal{F}_l}\sup\limits_{\zeta \in M_p}\sup\limits_{\substack{w \in \overline{G} \\ |w| \leq l}}\sup\limits_{\substack{z \in \overline{\Omega} \\ |z| \leq l}}\big|D \big(S_n(g_i, w, \zeta)(z) - g_i(w,z)\big)\big| \geq \frac{1}{s}$ for all $i = 1, 2, \dots$ we get $\max\limits_{D \in \mathcal{F}_l}\sup\limits_{\zeta \in M_p}\sup\limits_{\substack{w \in \overline{G} \\ |w| \leq l}}\sup\limits_{\substack{z \in \overline{\Omega} \\ |z| \leq l}}\big|D \big(S_n(g, w, \zeta)(z) - g(w,z)\big)\big| \geq \frac{1}{s}$ and the proof is completed.
	\end{proof}
	\begin{lemma}
		For every integer $\tau \geq 1, p \geq 1, m \geq 1, l \geq 1, j \geq 1$ and $s \geq 1$, the set $\bigcup\limits_{n \in \mu}\big[E(\tau, p, m, j, s, n) \bigcap F(p, l, s, n)\big]$ is open and dense in the space $X^\infty(G \times \Omega)$.
	\end{lemma}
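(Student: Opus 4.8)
The plan is to follow the density arguments of Sections~3 and~4, adapting them to the $A^\infty$-setting, where two changes are forced on us: the approximation of the target polynomial must now be carried out up to the boundary of $G\times\Omega$ and in the sense of all mixed partial derivatives, so Lemma~\ref{Merg_2} takes the role that Lemma~\ref{Merg_1} played before. Openness is immediate: by Lemma~\ref{Open_3}, for every $n\in\mu$ the sets $E(\tau,p,m,j,s,n)$ and $F(p,l,s,n)$ are open in $X^\infty(G\times\Omega)$, hence so is each intersection $E(\tau,p,m,j,s,n)\cap F(p,l,s,n)$, and hence so is the union over $n\in\mu$.

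For density, recall that $X^\infty(G\times\Omega)$ is by definition the closure of the polynomials, so it suffices to put every polynomial $g$ in the closure of the set in question for the topology of $A^\infty(G\times\Omega)$. A basic neighbourhood of $g$ has the form $\{f:\max_{D\in\mathcal F_L}\sup_{w\in\overline G,\,|w|\le N}\sup_{z\in\overline\Omega,\,|z|\le N}|D(f-g)(w,z)|<\varepsilon\}$ for some $\varepsilon>0$ and integers $L,N$, and since enlarging $N$ only strengthens the demand we may assume that $\{|\zeta|\le N\}$ contains $F_\tau$ and every factor of $T_m$. Write $T_m=\prod_{i=1}^d Q_i$ with $\overline\Omega_{i_0}\cap Q_{i_0}=\emptyset$. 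Choose closed disks centred at $0$: $\widetilde F_i\ (1\le i\le r)$ of radius $N$, and $S_i\ (i\ne i_0)$ large enough to contain $Q_i\cup\{z:|z|\le N\}$. For the $i_0$-th factor set $K_{i_0}=\overline\Omega_{i_0}\cap\{z:|z|\le N\}$ and let $\widetilde M_{i_0}$ be the union of $K_{i_0}$ with the bounded components of $\mathbb C\setminus K_{i_0}$; then $\widetilde M_{i_0}$ is compact with connected complement, and it is contained in $\overline\Omega_{i_0}$: by hypothesis $\mathbb C\setminus\overline\Omega_{i_0}$ has no bounded component (equivalently $\{\infty\}\cup[\mathbb C\setminus\overline\Omega_{i_0}]$ is connected), so $\mathbb C\setminus\overline\Omega_{i_0}$ lies in the unbounded component of $\mathbb C\setminus K_{i_0}$, whence every bounded component of $\mathbb C\setminus K_{i_0}$ is disjoint from $\mathbb C\setminus\overline\Omega_{i_0}$, i.e.\ lies in $\overline\Omega_{i_0}$. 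In particular $\widetilde M_{i_0}\cap Q_{i_0}=\emptyset$; put $S_{i_0}=\widetilde M_{i_0}\cup Q_{i_0}$. Being the union of two disjoint compacta each with connected complement, $S_{i_0}$ has connected complement (Janiszewski's separation theorem), so $\big(\prod_{i=1}^r\widetilde F_i\big)\times\big(\prod_{i=1}^d S_i\big)$ is a product of planar compact sets with connected complements.

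On this product define $h$ by $h(w,z)=g(w,z)$ when $z_{i_0}\in\widetilde M_{i_0}$ and $h(w,z)=f_j(w,z)$ when $z_{i_0}\in Q_{i_0}$; both pieces are polynomials, so $h$ is globally defined, and since $\widetilde M_{i_0}$ and $Q_{i_0}$ are at positive distance, $h$ extends to a function holomorphic on a neighbourhood of the product. By Lemma~\ref{Merg_2}, applied with the finite multi-index set corresponding to $\mathcal F_L$, there is a polynomial $p$ with $\max_{D\in\mathcal F_L}\sup|D(p-h)|<\min\{\varepsilon,\tfrac1{2s}\}$ over the product. On the slab where $z_{i_0}\in\widetilde M_{i_0}$, which contains $\{(w,z)\in\overline G\times\overline\Omega:|w|\le N,\ |z|\le N\}$ and on which $h=g$, this gives $\max_{D\in\mathcal F_L}\sup_{|w|\le N,\,|z|\le N}|D(p-g)|<\varepsilon$, so $p$ lies in the prescribed neighbourhood of $g$ and, being a polynomial, in $X^\infty(G\times\Omega)$. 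On the slab where $z_{i_0}\in Q_{i_0}$, which contains $F_\tau\times T_m$ and on which $h=f_j$, we get $\sup_{F_\tau\times T_m}|p-f_j|<\tfrac1{2s}$. Finally, since $p$ is a polynomial, for each $\zeta$ the expansion $p(w,z)=\sum_k p_k(w)(z-\zeta)^{N_k}$ has only finitely many nonzero terms, so there is $n'$ with $S_n(p,w,\zeta)(z)=p(w,z)$ for all $\zeta\in M_p$ and all $n\ge n'$; taking $n\in\mu$ with $n\ge n'$ yields $\sup_{\zeta\in M_p}\sup_{w\in F_\tau}\sup_{z\in T_m}|S_n(p,w,\zeta)(z)-f_j(w,z)|<\tfrac1s$ and $\max_{D\in\mathcal F_l}\sup_{\zeta\in M_p}\sup_{|w|\le l}\sup_{|z|\le l}|D(S_n(p,w,\zeta)(z)-p(w,z))|=0<\tfrac1s$, that is, $p\in E(\tau,p,m,j,s,n)\cap F(p,l,s,n)$. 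Hence the union is dense.

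The routine part — that a polynomial is eventually recovered by its own partial sums, so that both defining inequalities become trivial — is the same as in Sections~3 and~4. The step I expect to be the main obstacle is the point-set topology behind the choice of the compact set on which Lemma~\ref{Merg_2} is invoked: one must manufacture, inside $\overline\Omega_{i_0}$, a compact set with connected complement that still contains $\overline\Omega_{i_0}\cap\{|z|\le N\}$ (this is precisely where the hypothesis $\{\infty\}\cup[\mathbb C\setminus\overline\Omega_{i_0}]$ connected is used), keep it disjoint from $Q_{i_0}$ so that the glued function $h$ is holomorphic on a \emph{neighbourhood} of the product — which is what allows one to pass from Lemma~\ref{Merg_1} to the derivative version Lemma~\ref{Merg_2} — and verify via Janiszewski's theorem that adjoining $Q_{i_0}$ does not destroy connectedness of the complement.
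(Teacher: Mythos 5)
Your proof is correct and follows essentially the same route as the paper: openness from Lemma \ref{Open_3}, and density by gluing $g$ and $f_j$ on a product of planar compacta with connected complements, invoking Lemma \ref{Merg_2} for the derivative estimates and the fact that a polynomial is eventually recovered by its partial sums. The only (harmless) variations are cosmetic: you take large disks for the $w$-factors and build $\widetilde M_{i_0}$ by filling in the holes of $\overline\Omega_{i_0}\cap\{|z|\le N\}$, whereas the paper observes directly that this intersection already has connected complement under the hypothesis on $\{\infty\}\cup[\mathbb C\setminus\overline\Omega_{i_0}]$; you also make explicit (via Janiszewski) the connectedness of the complement of $S_{i_0}$, which the paper leaves implicit.
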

	\begin{proof}
		By Lemma \ref{Open_3} the sets $E(\tau, p, m, j, s, n) \bigcap F(p, l, s, n), n \in \mu$ are open. Therefore the same is true for the union $\bigcup\limits_{n \in \mu}\big[E(\tau, p, m, j, s, n) \bigcap F(p, l, s, n)\big]$. We shall prove that this set is also dense. By the definition of the set $X^\infty(G \times \Omega)$, it suffices to show that $\bigcup\limits_{n \in \mu}\big[E(\tau, p, m, j, s, n) \bigcap F(p, l, s, n)\big]$ is dense in the set of polynomials. Let $g$ be a polynomial. Let also $\mathcal{F}$ be a finite set of mixed partial derivatives in $(w, z), l_0$ a positive integer and $\varepsilon > 0$. It suffices to find $n \in \mu$ and $f \in E(\tau, p, m, j, s, n) \bigcap F(p, l, s, n)$, such that $$\sup_{\substack{w \in \overline{G} \\ |w| \leq l_0}}\sup_{\substack{z \in \overline{\Omega} \\ |z| \leq l_0}}|D \big(f(w,z) - g(w, z)\big)| < \varepsilon.$$ for every $D \in \mathcal{F}$.
		Let also $\tilde{F} = \displaystyle\prod_{i = 1}^r \tilde{F}_i, \tilde{M} = \displaystyle\prod_{i = 1}^d \tilde{M}_i$ where $\tilde{F}_i, i = 1, \dots, r$ are compact subsets of $\overline{G}_i$ with connected complement and $\tilde{M}_i, i = 1, \dots, d$ are compact subsets of $\overline{\Omega}_i$ with connected complement. Since the sets $\{\infty\} \bigcup [\mathbb{C} \setminus \overline{G}_i], i = 1, \dots, r$ and \break $\{\infty\} \bigcup [\mathbb{C} \setminus \overline{\Omega}_i], i = 1, \dots, d$ are connected we get that the sets $\overline{G}_i \bigcap \{w_i \in \mathbb{C} : |w_i| \leq l_0\}, i = 1, \dots, r$ and $\overline{\Omega}_i \bigcap \{w_i \in \mathbb{C} : |w_i| \leq l_0\}, i = 1, \dots, d$ have connected complement. So, we can assume that $\big(\overline{G} \bigcap \{w \in \mathbb{C}^r : |w| \leq l_0\}\big) \bigcup F_\tau \subseteq \tilde{F}$ and $\big(\overline{\Omega} \bigcap \{z \in \mathbb{C}^d : |z| \leq l_0\}\big) \subseteq \tilde{M}$.
		The set $T_m$ is of the form $T_m = \displaystyle\prod_{i = 1}^{d} Q_i$, where one of the sets $Q_i$ satisfies $\overline{\Omega}_i \bigcap Q_i = \emptyset$, which we denote by $Q_{i_0}$. Since $\tilde{M}_i \subseteq \overline{\Omega}_{i_0}$ we get $\tilde{M}_i \bigcap Q_i = \emptyset$ For $i = 1, \dots, d$ with $i \neq i_0$ let $B_i$ be closed balls such that $\tilde{M}_i \bigcup Q_i \subseteq B_i$. We define the function $h : \tilde{F} \times \displaystyle\prod_{i = 1}^d S_i \rightarrow \mathbb{C}$, where $S_i = B_i$ for $i \neq i_0$ and $S_{i_0} = \tilde{M}_{i_0} \bigcup Q_{i_0}$, with: 
		\begin{align*}
		& h(w, z) = g(w, z) \text{ for } w \in \tilde{F} \text{ and } z \text{ in the product of } B_i \text{ for } i \neq i_0 \text{ and } \tilde{M}_{i_0}, \\
		& h(w, z) = f_j(w, z) \text{ for } w \in \tilde{F} \text{ and } z \text{ in the product of } B_i \text{ for } i \neq i_0 \text{ and } Q_{i_0}.
		\end{align*}
		We notice that the functions $g$ and $f_j$ are polynomials and thus defined everywhere. The function $h$ is holomorphic on a neighbourhood of $\tilde{F} \times \displaystyle\prod_{i = 1}^d S_i$ and the set $\tilde{F} \times \displaystyle\prod_{i = 1}^d S_i$ is a product of planar compact sets with connected complement, so, by Lemma \ref{Merg_2}, there exists a polynomial $p(w, z)$ such that $$\sup\limits_{w \in \tilde{F}} \sup\limits_{z \in \prod_{i = 1}^d S_i}|p(w, z) - h(w, z)| < \min\big\{\varepsilon, \frac{1}{s}\big\}.$$
		and
		$$\sup\limits_{w \in \tilde{F}} \sup\limits_{z \in \prod_{i = 1}^d S_i}|D \big(p(w, z) - h(w, z)\big)| < \min\big\{\varepsilon, \frac{1}{s}\big\}.$$ for every $D \in \mathcal{F}$.
		By the definitions of the function $h$ and the sets $S_i$ we get \break $\sup\limits_{w \in \overline{G}, |w| \leq l_0}\sup\limits_{z \in \overline{\Omega}, |z| \leq l_0}|D \big(p(w,z) - g(w, z)\big)| < \varepsilon$ for every $D \in \mathcal{F}$ and $\sup\limits_{w \in F_\tau}\sup\limits_{z \in T_m}|p(w, z) - f_j(w, z)| < \frac{1}{s}$. For a fixed $\zeta_0 \in M_p$, the polynomial $p(w,z)$ can be written in the form $p(w,z) = \sum\limits_{k = 0}^{\infty}p_k(w)(z - \zeta_0)^{N_k}$ where $p_k$ are polynomials, such that all but finitely many of them are identically equal to 0. For $i = 1, \dots, d$ we set $l_i = \max\{N_k^{(i)} : N_k = (N_k^{(1)}, \dots, N_k^{(d)}), p_k \not\equiv 0, k = 0, 1, \dots \}$. Let $n'$ be an integer such that for each $k = 0, 1, \dots$ with $N_k^{(i)} \leq l_i$ for $i = 1, \dots, d$, we have $k \leq n'$. We notice that $S_n(p, w, \zeta)(z) = p(w, z)$ for all $\zeta \in M_p$ and $n \geq n'$. Thus, by choosing $n \in \mu$ such that $n \geq n'$ we have $$\sup\limits_{\zeta \in M_p}\sup\limits_{w \in F_\tau}\sup\limits_{z \in T_m}|S_n(p, w, \zeta)(z) - f_j(w, z)| < \frac{1}{s}$$
		and
		$$\max_{D \in \mathcal{F}_l}\sup_{\zeta \in M_p}\sup_{\substack{w \in \overline{G} \\ |w| \leq l}}\sup_{\substack{z \in \overline{\Omega} \\ |z| \leq l}}\big|D \big(S_n(p, w, \zeta)(z) - p(w,z)\big)\big| = 0 < \frac{1}{s} \ .$$
		This proves that the set $\bigcup\limits_{n \in \mu}\big[E(\tau, p, m, j, s, n) \bigcap F(p, l, s, n)\big]$ is indeed dense.
	\end{proof}
	\begin{theorem}
		\label{Main_3}
		Under the above assumptions and notation, the set $U^\mu_\infty(G, \Omega)$ is a $G_\delta$ and dense subset of the space $X^\infty(G \times \Omega)$ and contains a vector space except $0$, dense in $X^\infty(G \times \Omega)$.
	\end{theorem}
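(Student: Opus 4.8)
The plan is to deduce the first assertion directly from the three preceding lemmas together with Baire's Theorem, and the second assertion by the standard spaceability argument of \cite{Bayart}.

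For the topological genericity: recall that $X^\infty(G\times\Omega)$, being a closed subspace of the complete metrizable space $A^\infty(G\times\Omega)$, is itself complete metrizable, so Baire's Theorem applies. By Lemma \ref{Inter_3} we may write
$$U^\mu_\infty(G,\Omega)=\bigcap_{\tau,p,m,l,j,s}\ \ \bigcup_{n\in\mu}\big[E(\tau,p,m,j,s,n)\cap F(p,l,s,n)\big],$$
an intersection over a countable index set. By Lemma \ref{Open_3} each set $E(\tau,p,m,j,s,n)\cap F(p,l,s,n)$ is open in $X^\infty(G\times\Omega)$, hence so is each union $\bigcup_{n\in\mu}[E(\tau,p,m,j,s,n)\cap F(p,l,s,n)]$; and by the density lemma immediately preceding this theorem, each such union is dense. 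Thus $U^\mu_\infty(G,\Omega)$ is a countable intersection of open dense subsets of $X^\infty(G\times\Omega)$, so Baire's Theorem yields that it is a dense $G_\delta$ set.

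For the algebraic genericity I would follow the implication $(3)\Rightarrow(4)$ in the proof of Theorem 3 of \cite{Bayart}. Fix an enumeration $(q_k)_{k\ge1}$ of a countable dense subset of $X^\infty(G\times\Omega)$ — for instance the polynomials with coefficients having rational coordinates, which are dense by the very definition of $X^\infty(G\times\Omega)$. Writing $U^\mu_\infty(G,\Omega)=\bigcap_N W_N$ with each $W_N$ open dense, one constructs inductively a linearly independent sequence $(g_k)_{k\ge1}$ in $X^\infty(G\times\Omega)$, together with a nested sequence of basic open conditions, so that $g_k$ lies within distance $1/k$ of $q_k$ and every finite linear combination $\sum c_k g_k$ with $(c_k)\ne 0$ belongs to $\bigcap_N W_N=U^\mu_\infty(G,\Omega)$; the linear span of $\{g_k\}$ is then the desired dense vector subspace contained, except for $0$, in $U^\mu_\infty(G,\Omega)$. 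The two structural facts that make this work are that $f\mapsto S_n(f,w,\zeta)(z)$ and $f\mapsto D\big(S_n(f,w,\zeta)\big)$ are linear in $f$, so that perturbing a universal function by a small multiple of a fixed element changes the relevant partial-sum differences only slightly, and that the family of admissible limit functions $h$ in the definition of $U^\mu_\infty(G,\Omega)$ is stable under the manipulations used.

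The topological genericity is now purely formal. The main obstacle is the bookkeeping in the spaceability construction: one must arrange the inductive choices so that, simultaneously for all of the countably many open dense sets $W_N$ and for a cofinal family of linear combinations, the approximation conditions survive the finite-dimensional perturbations introduced at later stages. This is exactly the content of the argument in \cite{Bayart}, and since $X^\infty(G\times\Omega)$ is a separable Fréchet space on which the operators $f\mapsto S_n(f,w,\zeta)$ act continuously and linearly, that argument transfers with only notational changes; accordingly I would, as in Theorems \ref{Main_1} and \ref{Main_2}, carry out the $G_\delta$-density part in full and refer to \cite{Bayart} for the linear-subspace part.
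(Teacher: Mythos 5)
Your proposal is correct and follows essentially the same route as the paper: the first assertion is obtained by combining Lemma \ref{Inter_3}, Lemma \ref{Open_3} and the density lemma with Baire's Theorem in the complete metrizable space $X^\infty(G \times \Omega)$, and the algebraic genericity is deferred to the implication $(3) \Rightarrow (4)$ of Theorem 3 in \cite{Bayart}. Your sketch of the spaceability construction is somewhat more explicit than the paper's (which omits it entirely), but it is the same argument.
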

	\begin{proof}
		The fact the set $U^\mu_\infty(G, \Omega)$ is a $G_\delta$ and dense set is obvious by combining the previous lemmas with Baire's Theorem. The proof that it also contains a dense vector space except $0$ uses the first part of Theorem \ref{Main_3}, follows the lines of the implication $(3)$ implies $(4)$ of the proof of Theorem 3 in \cite{Bayart} and is omitted.
	\end{proof}
	We now consider the case where the center of expansion of the Taylor series of $f$ does not vary but is a fixed point in $\overline{\Omega}$.
	\begin{definition}
		Let $\zeta_0 \in \overline{\Omega}, N_k, k = 0, 1, \dots$ be an enumeration of $\mathbb{N}^d$ and $\mu$ be an infinite subset of $\mathbb{N}$. We define $U^\mu_\infty(G, \Omega, \zeta_0)$ to be the set of $f \in X^\infty(G \times \Omega)$ such that for every set $K = \displaystyle\prod_{i = 1}^{d} K_i$ where $K_i \subseteq \mathbb{C}$ are compact with $\mathbb{C} \setminus K_i$ connected for all $i = 1, \dots d$ and there is at least one $i_0 \in \{1, \dots, d\}$ such that $\overline{\Omega}_{i_0} \bigcap K_{i_0} = \emptyset$ and every continuous function $h : G \times K \rightarrow \mathbb{C}$ such that for every fixed $w \in G$, the function $K \ni z \rightarrow h(w, z) \in \mathbb{C}$ belongs to $A_D(K)$ and for every fixed $z \in K$, the function $G \ni w \rightarrow h(w, z) \in \mathbb{C}$ belongs to $H(G)$, there exists a strictly increasing sequence $\lambda_n \in \mu, n = 1, 2, \dots$ such that 
		\begin{align*}
		& \sup_{w \in L}\sup_{z \in K}\big|S_{\lambda_n}(f, w, \zeta_0)(z) - h(w,z)\big| \longrightarrow 0 \text{ and } \\
		& \sup_{w \in \overline{G}, |w| \leq l}\sup_{z \in \overline{\Omega}, |z| \leq l}\big|D \big(S_{\lambda_n}(f, w, \zeta_0)(z) - f(w,z)\big)\big| \longrightarrow 0
		\end{align*}
		for every compact subset $L$ of $G$, every differential operator $D$ of mixed partial derivatives in $(w, z)$ and every positive integer $l$.
	\end{definition}
	We notice that the class $U^\mu_\infty(G, \Omega, \zeta_0)$ can also be defined without the requirement that the sequence  $(\lambda_n)_{n \in \mathbb{N}}$ is strictly increasing and then the two definitions are equivalent; see \cite{Vlachou}.
	\begin{theorem}
		Under the above assumptions and notation, the set $U^\mu_\infty(G, \Omega, \zeta_0)$ is a $G_\delta$ and dense subset of the space $X^\infty(G \times \Omega)$ and contains a vector space except $0$, dense in $X^\infty(G \times \Omega)$.
	\end{theorem}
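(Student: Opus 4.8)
The plan is to mimic exactly the argument used for the analogous statement $U^\mu_\infty(G,\Omega,\zeta_0)$ would follow from $U^\mu_\infty(G,\Omega)$, in the same way the fixed-center theorems in Sections 3 and 4 were deduced from their varying-center counterparts. First I would observe the trivial inclusion $U^\mu_\infty(G,\Omega) \subseteq U^\mu_\infty(G,\Omega,\zeta_0)$: any $f$ that is universal with a center of expansion varying over all of $\overline{\Omega}$ is, in particular, universal for the single fixed center $\zeta_0 \in \overline{\Omega}$ (one simply takes $M = \{\zeta_0\}$, noting that the sequence $\lambda_n$ produced can be chosen to work simultaneously with the $f$-approximation part since the seminorm estimate there is over $|w|,|z|\le l$ and does not involve $M$). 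Since $U^\mu_\infty(G,\Omega)$ is dense in $X^\infty(G\times\Omega)$ by Theorem~\ref{Main_3}, it follows immediately that $U^\mu_\infty(G,\Omega,\zeta_0)$ is dense as well, and it contains the same dense vector space except $0$.

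Next I would show $U^\mu_\infty(G,\Omega,\zeta_0)$ is a $G_\delta$ set. Keeping the sets $F_\tau$, $M_p = \{z\in\overline{\Omega}: |z|\le p\}$, $T_m$, $\mathcal{F}_l$ and the rational-coefficient polynomials $f_j$ exactly as before, I would define
$$E(\tau,m,j,s,n) := \Big\{f \in X^\infty(G\times\Omega): \sup_{w\in F_\tau}\sup_{z\in T_m}\big|S_n(f,w,\zeta_0)(z) - f_j(w,z)\big| < \tfrac{1}{s}\Big\},$$
$$F(l,s,n) := \Big\{f \in X^\infty(G\times\Omega): \max_{D\in\mathcal{F}_l}\sup_{\substack{w\in\overline{G}\\ |w|\le l}}\sup_{\substack{z\in\overline{\Omega}\\ |z|\le l}}\big|D\big(S_n(f,w,\zeta_0)(z) - f(w,z)\big)\big| < \tfrac{1}{s}\Big\},$$
and claim
$$U^\mu_\infty(G,\Omega,\zeta_0) = \bigcap_{\tau,m,l,j,s}\ \bigcup_{n\in\mu}\big[E(\tau,m,j,s,n)\cap F(l,s,n)\big].$$
The proof of this identity follows line by line the proof of Lemma~\ref{Inter_3}, the only change being that $\zeta_0$ replaces the variable $\zeta$ throughout and the supremum over $\zeta\in M_p$ (or $M_\tau$) disappears; the approximation inputs are still Lemma~\ref{Merg_1} for the $h$-part on $F_\tau\times K$ and the fact that the partial sums of a polynomial stabilise to the polynomial for the $f$-part.

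Then I would check that each $E(\tau,m,j,s,n)$ and each $F(l,s,n)$ is open in $X^\infty(G\times\Omega)$; this is proved just as in Lemma~\ref{Open_3}, using that convergence in the topology of $A^\infty(G\times\Omega)$ forces $Dg_i \to Dg$ uniformly on the relevant compact subsets of $\overline{G\times\Omega}$ for all mixed partial derivatives $D$, hence convergence of the finitely many Taylor coefficients $a_k(g_i,w,\zeta_0)$ appearing in $S_n$, hence convergence of the relevant suprema, so the complements are closed. Finally, $U^\mu_\infty(G,\Omega,\zeta_0)$ is a countable intersection of the open sets $\bigcup_{n\in\mu}[E(\tau,m,j,s,n)\cap F(l,s,n)]$, so it is a $G_\delta$; combined with the density already established, it is a dense $G_\delta$, and by Theorem~\ref{Main_3} together with $U^\mu_\infty(G,\Omega)\subseteq U^\mu_\infty(G,\Omega,\zeta_0)$ it contains a dense vector space except $0$. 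I do not expect any genuine obstacle here: the entire argument is a transcription of the varying-center case with $\zeta=\zeta_0$ fixed, and the only point deserving a word of care is that, because the $f$-approximation part of the definition is stated with the $A^\infty$-seminorms (over $|w|,|z|\le l$) rather than over an arbitrary compact $M\subseteq\overline\Omega$, the passage from the index set of the intersection to arbitrary compacta is automatic and requires no exhaustion argument for $M$ in that second estimate.
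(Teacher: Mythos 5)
Your proposal is correct and follows essentially the same route as the paper: the trivial inclusion $U^\mu_\infty(G,\Omega)\subseteq U^\mu_\infty(G,\Omega,\zeta_0)$ combined with Theorem \ref{Main_3} gives density and the dense vector space except $0$, and the $G_\delta$ property comes from writing $U^\mu_\infty(G,\Omega,\zeta_0)$ as the countable intersection over $\tau,m,l,j,s$ of the open sets $\bigcup_{n\in\mu}[E(\tau,m,j,s,n)\cap F(l,s,n)]$, with the identity and openness proved exactly as in Lemmas \ref{Inter_3} and \ref{Open_3} with $\zeta$ frozen at $\zeta_0$. The sets you define coincide with those in the paper's proof, so no further comment is needed.
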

	\begin{proof}
		Clearly, we have $U^\mu_\infty(G, \Omega) \subseteq U^\mu_\infty(G, \Omega, \zeta_0)$, so $U^\mu_\infty(G, \Omega, \zeta_0)$ contains a $G_\delta$ and dense set and thus is dense. If we prove that it can be written as a countable intersection of open in $X^\infty(G \times \Omega)$ then it would be a $G_\delta$ and dense set itself.
		
		Let $F_\tau, T_m, \mathcal{F}_l$ and $f_j$ be as previously. For any $\tau, m, l, j, s, n$ with $\tau, m, l, j, s \geq 1, n \geq 0$, we denote by $E(\tau, m, j, s, n)$ and $F(l, s, n)$ the sets $$E(\tau, m, j, s, n) := \Big\{f \in X^\infty(G \times \Omega) : \sup_{w \in F_\tau}\sup_{z \in T_m}\big|S_n(f, w, \zeta_0)(z) - f_j(w,z)\big| < \frac{1}{s}\Big\},$$ \\
		$$F(l, s, n) := \Big\{f \in X^\infty(G \times \Omega) : \max_{D \in \mathcal{F}_l}\sup_{\substack{w \in \overline{G} \\ |w| \leq l}}\sup_{\substack{z \in \overline{\Omega} \\ |z| \leq l}}\big|D \big(S_n(f, w, \zeta_0)(z) - f(w,z)\big)\big| < \frac{1}{s}\Big\}.$$
		Following the lines of implication of the proofs of Lemma \ref{Inter_3} and Lemma \ref{Open_3} we can show that
		$$U^\mu_\infty(G, \Omega, \zeta_0)=\bigcap_{\tau, m, l, j, s} \ \ \bigcup_{n\in \mu} \big[E(\tau, m, j, s, n) \bigcap F(l, s, n)\big].$$
		and that the sets $E(\tau, m, j, s, n)$ and $F(l, s, n)$ are open in the space $X^\infty(G \times \Omega)$ for all $\tau \geq 1,, m \geq 1, l \geq 1, j \geq 1, s \geq 1$ and $n \in \mu$. Thus, $U^\mu_\infty(G, \Omega, \zeta_0)$ is a $G_\delta$ and dense set. Also, by Theorem \ref{Main_3} and the fact that $U^\mu_\infty(G, \Omega) \subseteq U^\mu_\infty(G, \Omega, \zeta_0)$, $U^\mu_\infty(G, \Omega, \zeta_0)$ contains a vector space except $0$, dense in $X^\infty(G \times \Omega)$.
	\end{proof}
	\begin{remark}
		For Universal Taylor series with parameters in $X^\infty(G \times \Omega)$ with respect to many enumerations, we refer to Remark \ref{Many} and Section 6 of \cite{Kioulafa}.
	\end{remark}
	
	\pagebreak
	\itshape
	Department of Mathematics\\
	National and Kapodistrian University of Athens\\
	Panepistimiopolis, 15784, Athens, Greece\\[\baselineskip]
	E-mail Addresses:\\
	g\_gavrilos@yahoo.gr\\
	conmaron@gmail.com\\
	vnestor@math.uoa.gr
\end{document}